\newtheorem{theorem}{Theorem}[section]
\newtheorem{lemma}[theorem]{Lemma}
\newtheorem{corollary}[theorem]{Corollary}
\theoremstyle{definition}
\newtheorem{remark}[theorem]{Remark}
\newtheorem{claim}[theorem]{Claim}
\DeclareMathOperator{\tb}{tb}
\DeclareMathOperator{\rot}{r}
\renewcommand{\L}{\ensuremath{\mathcal{L}}}
\numberwithin{equation}{section}
\begin{document}
\title{Tight small Seifert fibered manifolds with $e_0=-2.$}

\author{B\"{u}lent Tosun}
\address{Department of Mathematics, University of Virginia}
\email{bt5t@virginia.edu}

\begin{abstract}

 In this paper we provide the classification of tight contact structures on some small Seifert fibered manifolds. 
 As an application of this classification, combined with work of Lekili in \cite{L2010}, we obtain infinitely many counterexamples to a question of Honda-Kazez-Mati\'{c} that asks whether a right-veering, non-destabilizable open book necessarily supports a tight contact structure.
\end{abstract}
\maketitle
\thispagestyle{empty}

 \section{Introduction}

 The classification of tight contact structures on a given closed, oriented $3$--manifold is still one of the main and widely open problems of three dimensional contact geometry. Deep work of Colin, Giroux and Honda in \cite{CGH09} proves that a closed, oriented, atoroidal $3$--manifold has finitely many tight contact structures up to contact isotopy. Conversely, every closed, oriented, irreducible, toroidal $3$--manifold has infinitely many tight contact structures up to contact isotopy. In particular, in the case of closed, oriented Seifert fibered manifolds, if the base orbifold has positive genus or if the number of singular fibers is greater than three (as in both cases the manifold contains a (vertical) incompressible torus), then there are infinitely many tight contact structures on the manifold up to contact isotopy. Finally in \cite{LS09}, the existence question for tight contact structures on the remaining Seifert fibered manifolds was resolved by Lisca and Stipsicz who proved that, except for a small infinite family, all others admit tight contact structures.


 In this paper we are interested in the classification problem of tight contact structures on Seifert fibered manifolds over $S^2$ with three singular fibers. Such manifolds are called {\em small} Seifert fibered manifolds and denoted by $M=M(r_{1},r_{2},r_{3})$ where $r_{i}\in \mathbb{Q}$ are the (unnormalized) Seifert invariants that determine $M$ up to orientation and fiber preserving diffeomorphism. Using normalized Seifert invariants we will consider $M(e_0;r_{1},r_{2},r_{3})$ described in Figure~\ref{fig:small} where the {\it integer Euler number} $e_{0}(M)\in \mathbb{Z}$ is an invariant of the Seifert fibration once we require $r_{i}\in \mathbb{Q}\cap(0,1)$. The manifold $M(e_0;r_{1},r_{2},r_{3})$ is an irredicible, atoroidal rational homology sphere, except possibly in the degenerate situation $r_{1}+r_{2}+r_{3}=-e_{0}$: in this case $M$ contains a non-separating, incompressible, horizontal surface, and is therefore a bundle over $S^1$ with periodic monodromy (see~\cite[Proposition~1.11]{Hatcher}). Note that the normalized Seifert invariants clearly satisfy $0<r_1+r_2+r_3<3$, so $M$ can be a surface bundle only when $e_{0}=-2~ \textrm{or}~-1$. Our ability to classify tight contact structures on $M(e_0;r_{1},r_{2},r_{3})$ depends crucially on the value of $r_1+r_2+r_3$.

 The aforementioned result of Colin, Giroux and Honda says unless the small Seifert fibered space is also a torus bundle over the circle, the number of tight contact structures it can admit, up to isotopy, is finite. Hence if $e_{0}\neq -1,~ -2$, then are always finitely many isotopy classes of tight contact structures. The classification of tight contact structures on $M(e_{0};r_{1},r_{2},r_{3})$ for $e_{0}\leq-3$ and $e_{0}>0$ was completed by Wu \cite{W06}. The case $e_{0}=0$ was completed by Ghiggini, Lisca and Stipsicz \cite{GLS06}. The cases $e_{0}=-1,-2$ are incomplete, only some partial results are available due to Ghiggini, Lisca and Stipsicz \cite{GLS07} in case of $e_{0}=-1$ and Ghiggini \cite{G008} in case of $e_{0}=-2$. The purpose of this paper is to study the classification of tight contact structures on $M(-2;r_{1},r_{2},r_{3})$. The classification on such manifolds was initiated in \cite{G008} where Ghiggini among other things gave the classification of tight contact structures on $M(-2;r_{1},r_{2},r_{3})$ which are $L$-spaces. In this paper we generalize his result to some $M(-2;r_{1},r_{2},r_{3})$ that are not $L$-spaces. First we set some notation.

\begin{figure}[h!]
\begin{center}
  \includegraphics[width=5.5cm]{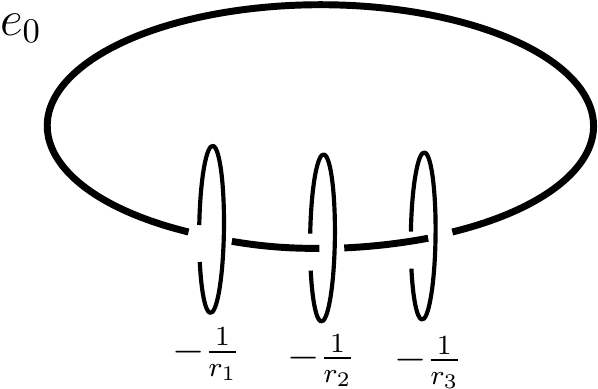}
 \caption{ The manifold $M(e_{0};r_{1},r_{2},r_{3})$. Small unknots in the surgery diagram give rise to singular fibers of Seifert fibration.}
  \label{fig:small}
\end{center}
\end{figure}

\smallskip
 For each of the rational number $r_1,~r_2,~r_3$ in $(0,1)$ we write

\begin{equation}\label{cont}
-\frac{1}{r_i}=[a^{i}_0,a^{i}_1,\cdots,a^{i}_{n_i}]=a^{i}_0 - \cfrac{1}{a^{i}_1 - \cfrac{1}{\ddots \, - \cfrac{1}{a^{i}_{n_i}}}}
\end{equation}

for some uniquely determined integers 

\[
a^{i}_0,a^{i}_1,\cdots,a^{i}_{n_i}\leq-2~,~ ~\qquad~ i=1,2,3
\]

Let $T(r_i)$ denote

\[
T(r_i)=|\prod_{k=0}^{n_{i}}(a^{i}_k+1)|
\]

Recall that we are using the normalized Seifert invariants, that is $r_i \in \mathbb{Q}\cap (0,1)$. We state our first result.   

\begin{theorem}\label{main} If one of the following holds

\begin{enumerate}

\item $r_1+r_2+r_3\geq \frac{9}{4}$

\item $r_1+r_2+r_3 < 2$

\item\label{intsur} $r_1=\frac{1}{2}$, $r_2=\frac{2}{3}$ and $r_3=\frac{k}{k+1}$, for $k\geq 6$.

\end{enumerate}

Then  the manifold $M(-2;r_{1},r_{2},r_{3})$ admits exactly $T(r_1)T(r_2)T(r_3)$ isotopy classes of tight contact structures, all of which are Stein fillable. The explicit Stein filling can be described by Legendrian surgery on all possible Legendrian realizations of the link (after converting each of the $-\frac{1}{r_i}$-framed unknot component to link of $a^{i}_j$- framed unknots with the help of Equation ~\ref{cont} in Figure~\ref{fig:large}). 
\end{theorem}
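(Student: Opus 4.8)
The proof amounts to pinning the count down from above and from below. \emph{Lower bound.} Starting from the surgery presentation in Figure~\ref{fig:large}, I would replace each $-\tfrac1{r_i}$-framed unknot by the chain of $a^i_j$-framed unknots dictated by Equation~\ref{cont}, obtaining a link all of whose components are unknots with surgery coefficient $\le -2$. Each component is realized as a Legendrian unknot with $\tb$ equal to one more than its surgery coefficient, so that Legendrian surgery recovers the prescribed framing: the central $(-2)$-component then has $\tb=-1$, forcing $\rot=0$, while the $j$-th component of the $i$-th arm admits exactly $|a^i_j+1|$ choices of rotation number. Legendrian surgery on any one of these $\prod_i\prod_j|a^i_j+1| = T(r_1)T(r_2)T(r_3)$ realizations produces a Stein fillable, hence tight, contact structure on $M(-2;r_1,r_2,r_3)$. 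To see that the resulting structures are pairwise non-isotopic, I would compute the first Chern classes --- and, when these agree, the $d_3$-invariants --- of the associated Stein fillings from the chosen rotation numbers, reducing any residual coincidences to the uniqueness of the convex-surface normal form obtained in the next step.

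\emph{Upper bound.} Here I would run the usual convex-surface machinery for Seifert fibered spaces. Write $M = V_1\cup V_2\cup V_3\cup(\Sigma\times S^1)$, with $V_i$ a solid-torus neighborhood of the $i$-th singular fiber and $\Sigma$ a pair of pants. Given a tight $\xi$, isotope the singular fibers to Legendrian curves with very negative twisting, pass to standard neighborhoods with convex boundary, and let $n_0$ (which one may take $\le 0$, since $e_0=-2\le -1$) be the maximal twisting number of a regular fiber. If $n_0<0$, one realizes convex vertical tori with efficient dividing sets, cuts $M$ into the pieces above with boundary slopes in a controlled range, and invokes Honda's classification of tight contact structures on solid tori and on $\Sigma\times S^1$: the solid-torus pieces are enumerated by the continued fractions of the $-\tfrac1{r_i}$ and contribute the factors $T(r_i)$, while the $\Sigma\times S^1$ piece together with $e_0=-2$ is essentially rigid, so gluing yields at most $T(r_1)T(r_2)T(r_3)$ isotopy classes. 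The delicate case is $n_0=0$, where the background $T^2\times I$ carries zero twisting and the enumeration above can fail; this is exactly where the hypotheses (1), (2), (3) enter. I expect that $r_1+r_2+r_3\ge\tfrac94$ and $r_1+r_2+r_3<2$ each force a tight $\xi$ with $n_0=0$ either to be overtwisted or to be isotopic to one with $n_0<0$ (the constant $\tfrac94$ emerging from an optimization in the relevant bypass and Farey-graph inequalities, and $r_1+r_2+r_3<2$ being the regime closest to Ghiggini's $L$-space classification), whereas in family (3) the arms are so short that $T(r_1)=T(r_2)=T(r_3)=1$ and the assertion reduces to \emph{uniqueness}; this I would settle by a direct state-traversal argument tailored to $r_1=\tfrac12$, $r_2=\tfrac23$, $r_3=\tfrac{k}{k+1}$ (note that $k=5$ is precisely the excluded torus-bundle case $r_1+r_2+r_3=2$).

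Combining the two bounds gives exactly $T(r_1)T(r_2)T(r_3)$ isotopy classes of tight contact structures, each realized by one of the Legendrian surgeries above and hence Stein fillable. The main obstacle is the zero-twisting case of the upper bound: controlling it is what confines the statement to the three listed families and demands the most delicate bookkeeping with bypasses, the Farey graph, and Honda's gluing theorem. A secondary and more routine point --- relevant only to (1) and (2), where the count exceeds one --- is certifying that the $T(r_1)T(r_2)T(r_3)$ fillings are genuinely distinct, for which the Chern-class computation together with the normal-form uniqueness established along the way should suffice, much as in Wu's and Ghiggini--Lisca--Stipsicz's treatments of the neighboring cases.
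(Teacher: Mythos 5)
Your lower bound is essentially the paper's: Legendrian surgery on all realizations of the diagram in Figure~\ref{fig:large} produces $T(r_1)T(r_2)T(r_3)$ Stein fillable structures, distinguished via Lisca--Mati\'{c} by the spin$^c$/Chern class data of the fillings. The genuine gap is in the upper bound, where you have misidentified where the hypotheses enter. You locate the difficulty in a putative zero-twisting case $n_0=0$ and propose that conditions (1)--(3) rule it out; but for $e_0=-2$ that case never occurs: Wu (and independently Ghiggini and Massot, as recalled in Section~\ref{sec:pre}) proved that \emph{every} tight contact structure on $M(-2;r_1,r_2,r_3)$ has maximal twisting $t(\xi)<0$, with no hypothesis on $r_1+r_2+r_3$. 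So the mechanism you propose (showing a tight structure with $n_0=0$ is overtwisted or isotopic to one with negative twisting) is aimed at a non-issue and does not address the actual obstruction.

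The real difficulty, and the reason for the hypotheses, is the thickening of the solid torus $V_3$. After cutting along a vertical annulus between $V_1$ and $V_2$ and edge-rounding, the boundary slope of the resulting neighborhood of $F_3$ is the $n_1$-dependent quantity $s_{n_1}$ of Formula~\ref{slope}, and one must show that for every admissible $n_1<0$ one can thicken $V_3$ until its slope, measured in $-\partial(M\setminus V_3)$, equals $0$; only then does Honda's solid-torus classification collapse the count to $T(r_1)T(r_2)T(r_3)$ (this is the key shortcut~\ref{shortcut}). The conditions $r_1+r_2+r_3\geq\frac{9}{4}$ and $r_1+r_2+r_3<2$ control the sign of $A=r_1+r_2+r_3-2$ and hence the monotone limit $\frac{Aq_3}{Cv_3}$ of $s_{n_1}$, which is what makes the required thickening available (with substantial extra work, via Lemma~\ref{guidelemma} and a case analysis on $q_1,q_2,v_1,v_2$, when $r_1+r_2\leq 1$ fails in case (2)). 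That this is the true obstruction --- and that your proposed repair cannot work --- is shown by Theorem~\ref{main1}: for $2\leq r_1+r_2+r_3<\frac{9}{4}$ the extra potential decompositions, all with strictly negative twisting, are realized by genuinely distinct tight contact structures, $\lfloor n/2\rfloor$ of which are not even Stein fillable, so the count is $\frac{n(n+1)}{2}$ rather than the product formula. Your reading of Part~\ref{intsur} as a uniqueness statement with $T(r_1)T(r_2)T(r_3)=1$ is correct, but there too the work is in thickening $V_3$ to slope $-1$ for all $n_1<0$ (immediate for $k\geq 8$, requiring the argument of Lemma~\ref{guidelemma} for $k=6,7$), not in excluding zero twisting.
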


The third part of Theorem~\ref{main} indicates that the bounds on the sum of the Seifert invariants in previous two are not necessarily sharp. On the other hand, our next result shows that the classification scheme for $M(-2;r_{1},r_{2},r_{3})$  with $2\leq r_1+r_2+r_3<\frac{9}{4}$ is much more complicated.

\begin{theorem}\label{main1}
 The manifold $M(-2;\frac{1}{2},\frac{2}{3},\frac{5n+1}{6n+1})$ for $n\geq 1$,  admits exactly $\frac{n(n+1)}{2}$ isotopy classes of tight contact structures, which are all homotopic and strongly fillable. On the other hand, at least $n$ of them are Stein fillable, and at least $\left\lfloor \frac{n}{2} \right\rfloor$ of the remaining ones are not Stein fillable.    
\end{theorem}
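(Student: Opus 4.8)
The first step is to identify the manifold: reversing orientation and renormalising the Seifert invariants shows that $M(-2;\tfrac12,\tfrac23,\tfrac{5n+1}{6n+1})$ is the Brieskorn sphere $-\Sigma(2,3,6n+1)$. In particular it is an integral homology sphere, so there is a unique $\mathrm{Spin}^{c}$ structure and the homotopy type of an oriented $2$--plane field on it is recorded by the single invariant $d_3\in\Q$; and for $n\ge2$ it is not an $L$--space, so the classification of \cite{G008} does not apply and new input is required. The proof then divides into an upper bound of $\tfrac{n(n+1)}{2}$ obtained by convex surface theory, and a matching lower bound together with the homotopy and fillability assertions obtained from explicit surgery constructions and Heegaard Floer invariants.

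For the upper bound I would run the convex decomposition scheme of Wu \cite{W06} and Ghiggini--Lisca--Stipsicz \cite{GLS07}: present the manifold as three solid-torus neighbourhoods of the singular fibres glued along vertical convex tori to a copy of $\Sigma_{0,3}\times S^1$, and enumerate the tight contact structures on the pieces --- Honda's solid-torus classification and the $S^1$--invariant models on $\Sigma_{0,3}\times S^1$ --- compatibly with the admissible dividing-curve slopes. Here $-1/r_1=[-2]$, $-1/r_2=[-2,-2]$, $-1/r_3=[-2,-2,-2,-2,-2,-(n+1)]$, so $T(r_1)=T(r_2)=1$ and $T(r_3)=n$, and the $n$ maximal-twisting structures are exactly those produced by Theorem~\ref{main}'s recipe, indexed by the rotation numbers of a $\tb=-n$ Legendrian unknot. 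What is new for $e_0=-2$ in the window $2\le r_1+r_2+r_3<\tfrac94$, which is precisely where this family lies, is that tight contact structures with non-maximal twisting along the regular fibre also survive; organising these by the twisting value $t\in\{0,1,\dots,n-1\}$ and counting at each level should give
\[
\sum_{t=0}^{n-1}(n-t)=\tfrac{n(n+1)}{2}.
\]
Making this enumeration rigorous --- in particular showing that the a priori many gluings at a given level are not overcounted and that each one is actually tight --- is the technical core and the step I expect to be hardest.

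For the lower bound I would realise $\tfrac{n(n+1)}{2}$ contact structures explicitly. The level $t=0$ consists of the $n$ structures obtained by Legendrian surgery on all Legendrian realisations of the standard link of Figure~\ref{fig:small} (with each $-1/r_i$--unknot replaced by the chain from Equation~\ref{cont}), which are Stein fillable by Theorem~\ref{main}; the higher levels come from modified contact-surgery presentations of the long chain, necessarily involving some contact $(+1)$--surgeries, and their strong fillability I would establish separately, for instance by realising each as a perturbation of a taut foliation on $-\Sigma(2,3,6n+1)$ and invoking that a weak filling of a rational homology sphere can be modified to a strong one. That the $\tfrac{n(n+1)}{2}$ structures are pairwise non-isotopic I would prove with the Ozsv\'ath--Szab\'o contact invariant: since $-\Sigma(2,3,6n+1)$ is surgery on a plumbing tree, $\widehat{HF}$ and $HF^+$ are computable by the plumbing algorithm, but because $\widehat{HF}(-M)$ is too small to separate that many classes one must use the finer data --- the $U$--module structure of $HF^+$, the cobordism maps induced by the fillings, and the convex surface normal forms themselves --- in the spirit of Ghiggini--Van Horn-Morris's treatment of $-\Sigma(2,3,6n-1)$. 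Finally, that all $\tfrac{n(n+1)}{2}$ structures are homotopic reduces to computing $d_3$ from each surgery presentation, via Gompf's formula in the Stein cases and the Ding--Geiges--Stipsicz formula in general; here the key point is that for the Stein fillings $c_1^2=0$, because deleting the far vertex of the long chain leaves exactly the degenerate plumbing with $r_1+r_2+r_3=-e_0=2$, which forces the relevant entry of the inverse intersection form --- and hence $c_1^2$, the only rotation-number-dependent term --- to vanish.

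It remains to separate Stein from strong fillability. Stein fillability of the $n$ structures at level $t=0$, and strong fillability of all $\tfrac{n(n+1)}{2}$, are as above. To show at least $\lfloor n/2\rfloor$ of the remaining structures are not Stein fillable I would argue as in Ghiggini's construction of strongly-but-not-Stein fillable contact manifolds: a Stein filling $W$ is negative definite with $b_1=0$, so after Eliashberg's symplectic cap it embeds in a closed symplectic $4$--manifold, and the resulting diagonalisability and $d$--invariant constraints for $-M$, together with the nonvanishing of the contact invariant of a fillable structure and the $HF^+$--computation above, confine the Floer data of a Stein-fillable $\xi$ to the range realised at level $t=0$; a count of how many of the $t\ge1$ structures can share that data then leaves at least $\lfloor n/2\rfloor$ excluded. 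Apart from the upper-bound enumeration, this last obstruction argument is the other place where soft contact topology does not suffice and one needs the Heegaard Floer computations to cooperate numerically.
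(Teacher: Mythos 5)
Your overall architecture---a convex-surface upper bound of $\sum_{t=0}^{n-1}(n-t)=\frac{n(n+1)}{2}$ organized by the twisting of the regular fiber, explicit constructions for the lower bound, and Floer-theoretic separation and obstruction in the style of Ghiggini--Van Horn-Morris---is the paper's architecture, and your upper-bound outline is essentially what the paper does (the maximal twisting is shown to be $-6k-1$ for some $0\le k\le n-1$, the complement of $V_3$ is rigid at each level, and $V_3$ contributes $n-k$ structures). But the heart of the lower bound is missing from your proposal. The paper does not produce the non-maximal-twisting structures from contact $(+1)$-surgery presentations or perturbed taut foliations; it realizes $M_n$ as smooth $(-n-1)$-surgery on a knot $F$ in $M_\infty$ (zero-surgery on the left trefoil), and performs Legendrian surgery on stabilizations $F_{i,j}$ of $F$ inside the weakly fillable torsion-$i$ structures $\xi_i$ on $M_\infty$; weak fillability descends and is promoted to strong fillability because $M_n$ is an integral homology sphere, which is how strong fillability of all $\frac{n(n+1)}{2}$ structures is obtained at once. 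Pairwise non-isotopy is then not a matter of unspecified ``finer data'': the key step is the explicit identity $c(\xi^n_{i,j})=\sum_{k=0}^{i}(-1)^k\binom{i}{k}\,c(\xi^n_{0,j-i+2k})$, proved by induction by comparing two factorizations of one Stein cobordism (attaching the torsion-reducing link $C$ before or after $F_{i,j}$) and using twisted-coefficient Heegaard Floer homology to make the relevant cobordism maps injective. Note your premise is also off: $\widehat{HF}_{(-1)}(-M_n)\cong\mathbb{Z}^n$ is not ``too small''---the $\frac{n(n+1)}{2}$ invariants are pairwise distinct, merely linearly dependent, elements of it, and the formula above is precisely what exhibits this. (Homotopy of all the structures then falls out of the grading of the contact class; your $d_3$/$c_1^2$ computation is a plausible but unverified alternative.)

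The non-Stein-fillability step is where your approach would genuinely fail. A Stein filling of this manifold need not be negative definite---that conclusion is available for L-spaces, and $M_n$ is not one for $n\ge2$, which is the whole point of the theorem---so the diagonalization/$d$-invariant constraints you invoke are not in force; and if a ``confinement to level $t=0$'' argument did work, it would obstruct all $\frac{n(n+1)}{2}-n$ non-maximal structures, which is more than the theorem asserts and inconsistent with the stated count of $\lfloor n/2\rfloor$. The actual mechanism is conjugation symmetry: exactly the structures $\xi^n_{i,0}$ with $i>0$ and $i\equiv n-1\pmod 2$ are isotopic to their conjugates (there are $\lfloor n/2\rfloor$ of these), so a hypothetical Stein filling carries a self-conjugate canonical Spin$^c$ structure $\mathfrak t$. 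Plamenevskaya's theorem forces $F^+_{X,\mathfrak t}(c(\xi^n_{i,0}))$ to generate $HF^+(S^3)$, while the binomial formula expresses $c(\xi^n_{i,0})$ as a combination of conjugate pairs $c(\xi^n_{0,j})+c(\xi^n_{0,-j})$ (plus, for the middle term, an even coefficient), each of which the conjugation-equivariant map $F^+_{X,\mathfrak t}$ sends to twice something, hence to zero. That contradiction, available only for the self-conjugate structures, is why the bound is $\lfloor n/2\rfloor$ and not more; without the explicit contact-invariant formula from the lower-bound step, this obstruction cannot even be set up.
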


\begin{remark} As mentioned above the manifolds $M(-2;r_{1},r_{2},r_{3})$ with $r_{1}+r_{2}+r_{3} = 2$ are particularly interesting, as they also enjoy a (unique) surface bundle structure over the circle. One can easily determine the fiber genus from the Seifert invariants. For example the manifolds  $M(-2;\frac{1}{2},\frac{3}{4},\frac{3}{4})$, $M(-2;\frac{1}{2},\frac{2}{3},\frac{5}{6})$ and $M(-2;\frac{2}{3},\frac{2}{3},\frac{2}{3})$ are the only torus bundles over circle. Our finiteness result in Theorem~\ref{main} of course cannot include these sporadic cases. We only comment that, the classification of tight contact structures on these manifolds is given independently by Honda in \cite{H00} and Giroux in \cite{G00}. According to that classification each of these manifolds carries infinitely many tight contact structures, distinguished by their Giroux torsion. Among them there is a unique one with Giroux torsion zero, all others have positive Giroux torsion. A result of Gay in \cite{GAY06} proves that positive Giroux torsion is an obstruction to (strong) fillabilty ($\it{cf.}$ \cite{GHM07}), so these manifolds can admit at most one Stein fillable tight structure. On the other hand one can easily see an explicit Stein fillable tight structure from the Figure~\ref{fig:small} on each of these manifolds. The remaining surface bundles are of higher genus and necessarily have periodic monodromy. 
In particular, they do carry finitely many tight contact structures. Unfortunately we do not have a technique yet to address the classification on such manifolds.
\end{remark}

By using Part~\ref{intsur} of Theorem~\ref{main}, we can extend work of Lekili in ~\cite[Theorem~$1.2$]{L2010} to provide an infinite family of examples of right-veering mapping classes on the four punctured sphere each of which is non-destabilizable and yet supports an overtwisted contact structure. These examples then provide an infinite family of counterexamples for a conjecture of Honda-Kazez-Matic in \cite{HKM2007}. See \cite{L2012}, \cite{KR2012} and \cite{IK2014} for more of such examples.

\begin{corollary}  
For each $k\geq 6$, there are open books $(\Sigma, \phi_k)$ on the Seifert fibered manifolds $M(-2;\frac{1}{2},\frac{2}{3},\frac{k}{k+1})$, where the mapping classes $\phi_k=t^{k+1}_at^2_bt_ct_dt^{-2}_e$ are right-veering, cannot be destabilized and support overtwisted contact structures. See Figure~\ref{Figure2}.    
\end{corollary}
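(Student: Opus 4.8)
The plan is to verify the three asserted properties of the open book $(\Sigma,\phi_k)$ in turn, the genuinely new ingredient being the overtwistedness, which we deduce from Part~\ref{intsur} of Theorem~\ref{main} together with Lekili's analysis in \cite{L2010}. First, recall (or re-derive by a contact surgery calculus computation) that the open book with page the four-punctured sphere $\Sigma$ and monodromy $\phi_k=t^{k+1}_at^2_bt_ct_dt^{-2}_e$, with the curves $a,b,c,d,e$ as in Figure~\ref{Figure2}, supports a contact structure $\xi_k$ on $M(-2;\tfrac12,\tfrac23,\tfrac{k}{k+1})$: converting each positive Dehn twist to a Legendrian surgery and $t^{-2}_e$ to a contact $(+1)$-surgery on a Legendrian realization of $a,b,c,d,e$ on pages, and simplifying, one reaches the surgery presentation of $M(-2;\tfrac12,\tfrac23,\tfrac{k}{k+1})$ in Figure~\ref{fig:small}; this is precisely the computation of \cite{L2010}, carried out here with $k$ left as a free parameter. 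The class $\phi_k$ is right-veering for every $k\ge6$: since the right-veering diffeomorphisms of a surface form a monoid containing every positive Dehn twist \cite{HKM2007}, one reduces via $\phi_k=t^{k-6}_a\phi_6$ to the single class $\phi_6=t^{7}_at^2_bt_ct_dt^{-2}_e$, which is Lekili's base case and can in any case be checked directly from the Honda-Kazez-Mati\'c arc criterion.

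For non-destabilizability, suppose toward a contradiction that $(\Sigma,\phi_k)$ is a positive stabilization of some open book $(\Sigma',\phi')$. Then $\Sigma'$ is a three-holed sphere or a one-holed torus, and, up to conjugacy in the mapping class group of $\Sigma$, the class $\phi_k$ equals the image of $\phi'$ composed with a single Dehn twist about the boundary of a regular neighbourhood of a properly embedded destabilizing arc. I would rule this out exactly as in \cite{L2010}: the mapping class group of the four-holed sphere, together with its boundary twists, is completely explicit, the essential properly embedded arcs fall into finitely many orbits under this group and the symmetries respected by $\phi_k$, and a finite check shows that no conjugate of $\phi_k$ has the required form. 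Crucially, raising the exponent of $t_a$ cannot manufacture a destabilizing arc, so the argument is uniform in $k\ge6$. This is the step I expect to be the main obstacle.

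Finally, to see that $\xi_k$ is overtwisted, put $r_1=\tfrac12$, $r_2=\tfrac23$, $r_3=\tfrac{k}{k+1}$. In the notation of~\eqref{cont} one has $-1/r_1=[-2]$, $-1/r_2=[-2,-2]$ and $-1/r_3=[\,\underbrace{-2,\dots,-2}_{k}\,]$, whence $T(r_1)=T(r_2)=T(r_3)=1$. By Part~\ref{intsur} of Theorem~\ref{main}, $M(-2;r_1,r_2,r_3)$ therefore admits a \emph{unique} tight contact structure up to isotopy, and it is Stein fillable; in particular it has non-vanishing Ozsv\'ath-Szab\'o contact invariant. Hence it suffices to show that $\xi_k$ is not isotopic to this one tight structure, and, following \cite{L2010}, I would compute an invariant of $\xi_k$ directly from the open book (its $\mathrm{Spin}^{c}$ structure and $d_3$-invariant via the Ding-Geiges-Stipsicz formula, or the vanishing of its contact class) and compare with the same invariant of the Stein filling produced by Theorem~\ref{main}; the two disagree, so $\xi_k$ cannot be the unique tight structure and is therefore overtwisted. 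Consequently $(\Sigma,\phi_k)$ is right-veering and non-destabilizable yet supports an overtwisted contact structure, giving a negative answer to the question of Honda-Kazez-Mati\'c for every $k\ge6$. The gain over \cite{L2010} is precisely that Part~\ref{intsur} of Theorem~\ref{main} supplies the full tight classification for the whole family, with no $L$-space hypothesis, so a single invariant comparison settles all $k$ at once; the uniform version of that invariant computation is the secondary technical point.
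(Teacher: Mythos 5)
Your proposal is correct and takes essentially the same route as the paper: one cites Lekili for the right-veering, non-destabilizability, and vanishing of the contact invariant $c(\xi_{(\Sigma,\phi_k)})$ (the latter already rules out Stein fillability, so no $d_3$ comparison is needed), and then invokes Part~\ref{intsur} of Theorem~\ref{main} with $T(r_1)=T(r_2)=T(r_3)=1$ to see the unique tight structure is Stein fillable, forcing $\xi_{(\Sigma,\phi_k)}$ to be overtwisted. The steps you flag as potential obstacles (uniform non-destabilizability in $k$ and the invariant comparison) are precisely the ones the paper outsources wholesale to \cite{L2010}, so no new work is required there.
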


\begin{figure}[!h]
\centering
\includegraphics[scale=0.7]{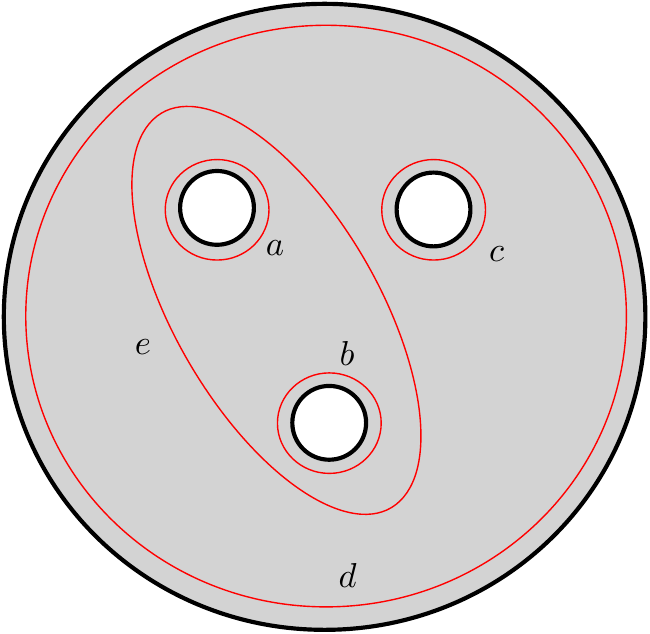} 
\caption{The open book $(\Sigma, \phi_k)$ on the Seifert fibered manifolds $M(-2;\frac{1}{2},\frac{2}{3},\frac{k}{k+1})$.}
\label{Figure2}
\end{figure}

\begin{proof} 
Lekili proves that the mapping classes $\phi_k=t^{k+1}_at^2_bt_ct_dt^{-2}_e$ are right-veering and non-destabilizable. Moreover he proves that \cite[Proposition~$3.2$]{L2010}, the contact invariants $c(\xi_{(\Sigma, \phi_k)})$ of the corresponding contact structures vanish. In particular the contact structures $\xi_{(\Sigma, \phi_k)}$ cannot be Stein fillable. It is not difficult to check the open book smoothly describes the three manifold $M(-2;\frac{1}{2},\frac{2}{3},\frac{k}{k+1})$ on which, by Part~\ref{intsur} of Theorem~\ref{main} above, for each $k\geq 6$, there is exactly one tight contact structure which is Stein fillable.
\end{proof}

\begin{figure}[h!]
\begin{center}
  \includegraphics[width=11cm]{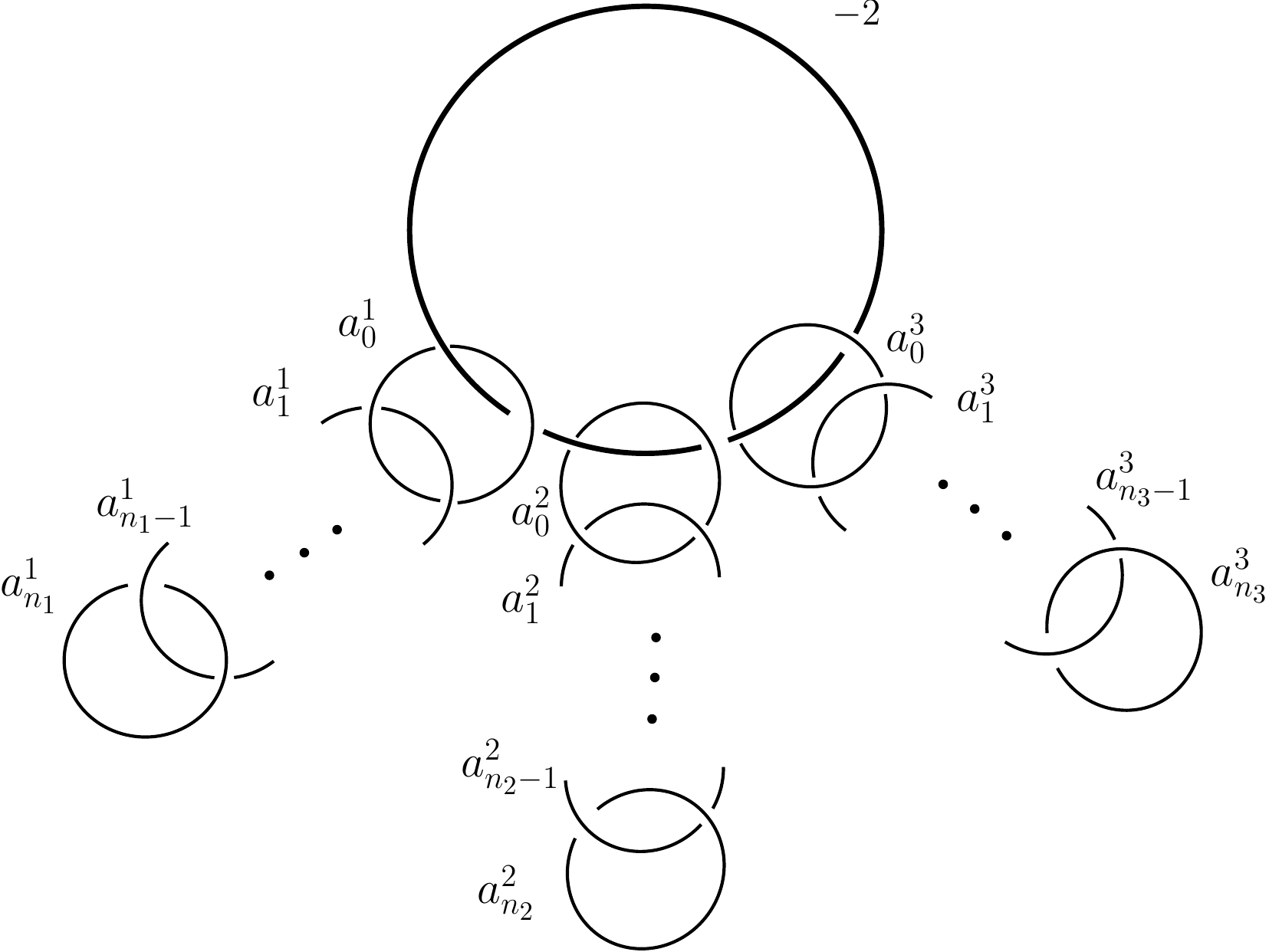}
 \caption{ The manifold $M(-2;r_{1},r_{2},r_{3})$ where $-\frac{1}{r_i}=[a^{i}_0,a^{i}_1,\cdots,a^{i}_{n_i}]$.}
  \label{fig:large}
\end{center}
\end{figure}

\smallskip
\textit {Acknowledgement.}
The author is grateful to Steven Boyer, Adam Clay, John Etnyre, Paolo Ghiggini, Amey Kaloti and  Yank\i\ Lekili  for useful discussions. He would like to particularly thank Tom Mark for very helpful comments on an earlier draft of this work. He was supported in part by an AMS-Simons travel grant, and also thanks the Max Planck Institute for Mathematics for their hospitality
during the summer of 2014.  

\section{Preliminaries}\label{sec:pre}

 In this section, assuming the reader is familiar with convex surface theory of Giroux \cite{G91}, we will list some results regarding bypasses and their consequences due to Honda \cite{H000}. These results will be used again and again in the rest of the paper.
 
We first recall the twisting number which is an invariant in the classification of the tight structures on Seifert fibered manifolds.

Let $\xi$ be a tight structure on a Seifert fibered manifold and $L$ be a Legendrian knot (smoothly) isotopic to a regular fiber of the Seifert fibration. Let $\phi$ denote this smooth isotopy. Now the twisting number $t(L,\phi)$ is defined as the difference between contact framing and the framing induced by the Seifert fibration. The maximal twisting number of the contact structure $\xi$ is defined to be $t(\xi)=\max_{\phi}\min\{t(L,\phi),0\}$. Wu \cite[Theorem~$1.3$]{W06} proved that the maximal twisting number  $t(\xi)<0$ for any tight contact structure $\xi$ on $M(-2;r_{1},r_{2},r_{3})$. The same result was also obtained by Ghiggini \cite[Corollary $4.6$]{G008} and independently  by Massot \cite[Theorem~B]{M2008}.

We now return to describe the effect of bypass attachment in terms of the Farey tesselation of the hyperbolic unit disk.


\begin{theorem}[The Farey Tessellation ~\cite{H000}] Let $T$ be a convex torus in standard form with $|\Gamma_{T}|=2,$
dividing slope $s$ and ruling slope $r\not=s.$ Let $D$ be a bypass for
$T$ attached to the front of $T$ along a ruling curve. Let $T'$ be the
torus obtained from $T$ by attaching the bypass $D.$ Then
$|\Gamma_{T'}|=2$ and the dividing slope $s'$ of $\Gamma_{T'}$ is
determined as follows: let $[r,s]$ be the arc on $\partial\mathbb{D}$ (where $\mathbb{D}$ is the disc model of the hyperbolic plane)
running from $r$ counterclockwise to $s,$ then $s'$ is the point in
$[r,s]$ closest to $r$ with an edge to $s.$ If the bypass is attached to the back of $T$ then the same algorithm
works except one uses the interval $[s,r]$ on
$\partial\mathbb{D}$. 
\end{theorem}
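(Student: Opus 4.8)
The plan is to reduce to an explicit local model by a change of coordinates, carry out the computation there, and then transport the answer back equivariantly. Recall that the mapping class group of $T^{2}$ acts on the circle of slopes $\Q\cup\{\infty\}\subset\partial\mathbb{D}$ through $PSL(2,\Z)$, and that this action preserves both the Farey tessellation (an edge joins $p/q$ to $p'/q'$ precisely when $|pq'-p'q|=1$) and the cyclic order on $\partial\mathbb{D}$. Applying a suitable such change of coordinates we may assume the dividing slope is $s=\infty$, and then, using the subgroup generated by $z\mapsto z+1$ (which fixes $\infty$), that the ruling slope satisfies $r\in(-1,0)$; write $r=-p/q$ with $0<p<q$ and $\gcd(p,q)=1$. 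The case in which $r$ is Farey-adjacent to $s$ cannot arise here, since then a ruling curve meets $\Gamma_{T}$ in only two points, too few to contain an attaching arc that crosses $\Gamma_{T}$ three times. In these coordinates the neighbors of $\infty$ in the Farey tessellation are exactly the integers, so the point of the arc $[r,\infty]$ closest to $r$ with an edge to $\infty$ is $0$; thus it suffices to prove that attaching the bypass produces $|\Gamma_{T'}|=2$ with dividing slope $0$.

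For the model computation, put $T$ in standard form, with $\Gamma_{T}$ two vertical curves and Legendrian ruling curves of slope $r$. A ruling curve $L$ meets $\Gamma_{T}$ in $2q\geq 4$ points, and the attaching arc $\alpha\subset L$ of the bypass $D$ crosses $\Gamma_{T}$ in three consecutive such points. By the bypass attachment lemma \cite{H000}, the standard $I$--invariant collar of $T$ together with $D$ is a copy of $T^{2}\times I$ whose outer boundary $T'$ is convex, and $\Gamma_{T'}$ is obtained from $\Gamma_{T}$ by the standard local reconnection of the three strands meeting $\alpha$; after edge-rounding, Giroux's criterion \cite{G91} (no contractible dividing curves in a tight $T^{2}$) leaves $|\Gamma_{T'}|=2$. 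It remains to read off the homology class of the two surviving curves.

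The heart of the matter --- and the step I expect to be the main obstacle --- is to show that this homology class has slope exactly $0$ for \emph{every} admissible $r\in(-1,0)$, rather than some other neighbor of $\infty$. I would organize this as an induction on $q$, the geometric intersection number of the ruling and dividing slopes. The base case $q=2$ (so $r=-1/2$) is a direct picture: four intersection points, a three-point attaching arc, one application of the local move, and edge-rounding, after which the two dividing curves visibly have slope $0$. For the inductive step I would pass to an intermediate convex torus carrying ruling curves of the slope that is the Farey ancestor of $r$ lying between $r$ and $s$ --- equivalently, peeling off one term of the continued fraction expansion of $-1/r$ --- which strictly decreases $q$; the factorization of the collar $T^{2}\times I$ into basic slices then identifies the bypass along a slope-$r$ ruling curve with one carried out on this simpler configuration, so the inductive hypothesis applies.

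Finally, undoing the change of coordinates completes the argument: the chosen element of $PSL(2,\Z)$ sends $\infty$ to $s$, sends $0$ to the point of $[r,s]$ closest to $r$ with an edge to $s$, and preserves the count $|\Gamma_{T'}|=2$, so the general ``front'' case follows from the normalized one. The ``back'' case is identical after composing with an orientation-reversing change of coordinates, which replaces the counterclockwise arc $[r,s]$ by the clockwise arc $[s,r]$.
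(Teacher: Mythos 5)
This statement is quoted background: the paper offers no proof and simply cites Honda's bypass attachment lemma \cite{H000}, so the comparison is with the standard argument there. Your overall frame is the right one and matches it in spirit: normalize by the $PSL(2,\Z)$-equivariance of both the Farey tessellation and the bypass operation, reduce to a local model with $s=\infty$ and $r=-p/q\in(-1,0)$, use the bypass attachment lemma to identify $\Gamma_{T'}$ with the result of the local three-strand reconnection, and invoke Giroux's criterion to exclude contractible components. Your observation that $r$ Farey-adjacent to $s$ cannot occur (too few intersections to support an attaching arc) is also correct. One small caution: whether the normalized answer is $0$ or $-1$ depends on the orientation conventions tied to ``front'' versus ``back,'' so this should be pinned down in the base picture rather than asserted.

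The genuine gap is the inductive step, and it is a serious one. First, it is circular: the notion of a basic slice and the factorization of $T^2\times I$ layers into basic slices are established in \cite{H000} \emph{as consequences of} this very lemma --- the bypass attachment formula is what tells you the possible boundary slopes of a basic slice in the first place --- so you cannot appeal to that factorization here. Second, even setting circularity aside, the step is not well-defined: a single bypass attachment is an elementary move and does not factor through an intermediate torus. Changing the ruling slope of $T$ to the Farey ancestor of $r$ is a Giroux-flexibility move on the characteristic foliation, but the attaching arc of $D$ is a fixed arc sitting on a fixed slope-$r$ curve; you give no mechanism identifying this bypass with one attached along a ruling curve of the simpler slope, and producing such an identification is essentially equivalent to the statement being proved. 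The standard proof needs no induction at all: in the normalized coordinates one computes $[\Gamma_{T'}]\in H_1(T^2)$ directly from the local reconnection (the three intersections of the attaching arc with $\Gamma_T$ become one, and the two new arcs run parallel to the attaching arc, hence in the class of the ruling curve), a computation uniform in $q$; together with the absence of contractible components this determines both $|\Gamma_{T'}|=2$ and $s'$ in one stroke. Replacing your induction by that single homological count would repair the argument.
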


 As we see that bypasses are useful in changing dividing curves on a
surface and has well understood effect on the dividing curves on a torus. We now mention a standard way to find them, called the
Imbalance Principle.

\begin{theorem}[The Imbalance Principle~\cite{H000}]
 Suppose that $\Sigma$ and $\Sigma'$ are two
disjoint convex surfaces and $\mathcal A$ is a convex annulus whose interior is
disjoint from $\Sigma$ and $\Sigma'$ but its boundary is Legendrian
with one component on each surface. If $|\Gamma_\Sigma\cdot \partial
\mathcal A|>|\Gamma_{\Sigma'}\cdot \partial \mathcal A|$ then there will be a dividing curve on
$\mathcal A$ that cuts a disk off of $\mathcal A$ that has part of its boundary on
$\Sigma$, and hence a bypass for $\Sigma$ on $\mathcal A$.
\end{theorem}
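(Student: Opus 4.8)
\emph{Proof proposal.} The plan is to analyze the dividing set $\Gamma_{\mathcal A}$ of the convex annulus $\mathcal A$ and show that the imbalance of intersection numbers forces $\Gamma_{\mathcal A}$ to contain a boundary-parallel arc whose two endpoints both lie on the component of $\partial\mathcal A$ sitting on $\Sigma$. Write $L=\partial\mathcal A\cap\Sigma$ and $L'=\partial\mathcal A\cap\Sigma'$. Since $\mathcal A$ is convex with Legendrian boundary, $\Gamma_{\mathcal A}$ is a finite disjoint union of properly embedded arcs, with endpoints on $\partial\mathcal A$, together with closed curves. The first step is to count the endpoints of $\Gamma_{\mathcal A}$ on $L$ and on $L'$: I claim there are exactly $n:=|\Gamma_\Sigma\cdot\partial\mathcal A|$ of them on $L$ and $n':=|\Gamma_{\Sigma'}\cdot\partial\mathcal A|$ on $L'$. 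This rests on three standard facts: first, for a Legendrian curve $\gamma$ on a convex surface $S$ the twisting of the contact framing relative to the framing induced by $S$ equals $-\tfrac12|\gamma\cap\Gamma_S|$; second, the number of endpoints of the dividing set of a convex surface with Legendrian boundary on a given boundary component $c$ equals $-2$ times the twisting of $c$ with respect to the surface framing; and third, two convex surfaces meeting transversally along a common Legendrian curve induce the same framing on it, because at each point the two directions normal to the respective surfaces are linearly independent in the normal bundle of the curve, so their relative winding number vanishes. Combining the first fact applied to $L\subset\Sigma$ and $L'\subset\Sigma'$, the second applied to $L,L'\subset\mathcal A$, and the third applied along $L$ and along $L'$ yields the asserted counts.

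Next I would feed the hypothesis $n>n'$ into a purely combinatorial argument inside the annulus. Each arc of $\Gamma_{\mathcal A}$ has its two endpoints either both on $L$, both on $L'$, or one on each; if no arc had both endpoints on $L$, then the arcs meeting $L$ would pair the $n$ endpoints on $L$ injectively with endpoints on $L'$, forcing $n\leq n'$. Hence some arc $\gamma_0\subset\Gamma_{\mathcal A}$ has both endpoints on $L$. As $\mathcal A$ is planar, $\gamma_0$ cuts $\mathcal A$ into a disk $D_0$ disjoint from $L'$ and an annulus containing $L'$; choosing $\gamma_0$ so that $D_0$ is innermost, $\operatorname{int}(D_0)$ is disjoint from $\Gamma_{\mathcal A}$. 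A closed component of $\Gamma_{\mathcal A}$ lying inside $D_0$ would be contractible in $\mathcal A$, hence excluded by Giroux's criterion in the tight ambient setting where the principle is applied; otherwise one simply iterates the choice of $\gamma_0$. Then $\partial D_0=\gamma_0\cup\alpha$ with $\alpha\subset L\subset\Sigma$, so $\gamma_0$ is a dividing arc of $\mathcal A$ cutting off a disk part of whose boundary lies on $\Sigma$, which is precisely the conclusion of the theorem. Finally $\gamma_0$ is non-isolating, so the Legendrian realization principle turns $D_0$ into a convex half-disk with Legendrian boundary, and the standard correspondence between boundary-parallel dividing arcs and bypasses identifies $D_0$ as a bypass for $\Sigma$ contained in $\mathcal A$.

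The step I expect to be the main obstacle is the bookkeeping in the first paragraph: matching the endpoints of $\Gamma_{\mathcal A}$ with the geometric intersection numbers $|\Gamma_\Sigma\cdot\partial\mathcal A|$ and $|\Gamma_{\Sigma'}\cdot\partial\mathcal A|$, and in particular carefully comparing the framings that $\mathcal A$, $\Sigma$ and $\Sigma'$ induce on the boundary curves. Once that identification is secured, the combinatorial step and the passage from a boundary-parallel dividing arc to a bypass are routine.
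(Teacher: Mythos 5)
Your argument is correct and is precisely the standard proof of the Imbalance Principle: the paper itself states this result without proof, quoting it from Honda \cite{H000}, and Honda's argument is exactly the one you give (identify the number of endpoints of $\Gamma_{\mathcal A}$ on each boundary component with $|\Gamma_\Sigma\cdot\partial\mathcal A|$ and $|\Gamma_{\Sigma'}\cdot\partial\mathcal A|$ via twisting numbers and the agreement of the framings induced by transversally intersecting surfaces, then use the parity/pigeonhole count on arcs to produce an arc with both endpoints on the $\Sigma$--side, and take an outermost such arc to get the half-disk, i.e.\ the bypass). The only cosmetic point is your phrase ``directions normal to the respective surfaces,'' where you mean the directions tangent to each surface and transverse to the curve; the linear-independence and winding-number-zero reasoning that follows is the right justification.
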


\begin{theorem}[The Twist Number Lemma~\cite{H000}] Consider a Legendrian curve $L$ with twisting number $n$, relative to a fixed framing
and a standard tubular neighborhood V of $L$ with boundary slope $\frac{1}{n}$. If there
exists a bypass $D$ which is attached along a Legendrian ruling curve of slope
$r$, and $\frac{1}{r}\geq n+1$, then there exists a Legendrian curve with larger twisting
number isotopic (but not Legendrian isotopic) to $L$
\end{theorem}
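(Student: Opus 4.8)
The plan is to attach the given bypass to $\partial V$, read off the new boundary dividing slope using the Farey Tessellation theorem, and observe that the inequality $\tfrac1r\geq n+1$ is exactly the condition forcing this slope to be one for which the enlarged solid torus is a standard neighborhood of a Legendrian curve of twisting number larger than $n$. (As in the intended applications, one may take $n<0$.)

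First I would fix an identification of $\partial V$ with $\R^2/\Z^2$ in which the meridian of $V$ has slope $\infty$ and the fixed framing of $L$ has slope $0$; then the standard neighborhood $V$ has $|\Gamma_{\partial V}|=2$ with dividing slope $\tfrac1n$. Since $\tfrac1r\geq n+1$ already forces $r\neq\tfrac1n$, I may take the Legendrian ruling curves of $\partial V$ to have slope $r$. Because $V$ is a \emph{standard} neighborhood, every bypass for $\partial V$ lies in the complement of $V$, so attaching $D$ along the ruling curve of slope $r$ produces a convex torus $T'$ bounding a solid torus $V'\supset V$ with $|\Gamma_{T'}|=2$ and some dividing slope $s'$. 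By the Farey Tessellation theorem, $s'$ is the first vertex of the Farey graph joined to $\tfrac1n$ by an edge that one meets while moving from $r$ toward $\tfrac1n$ along the appropriate boundary arc of $\mathbb D$ (namely $[r,\tfrac1n]$ or $[\tfrac1n,r]$, according to the side of $\partial V$ to which the bypass is attached).

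The combinatorial heart is then the following. With the convention $\tfrac10=\infty$, the points $0=\tfrac01$ and $\tfrac1{n+1}$ are the two Farey neighbors of $\tfrac1n$ that bound the short arc of $\partial\mathbb D$ containing $\tfrac1n$, and a standard feature of the Farey tessellation is that \emph{every} Farey neighbor of $\tfrac1n$ lies in this closed short arc; an elementary computation identifies the corresponding open short arc with $\{\,s:\tfrac1s<n+1\,\}$. Thus the hypothesis $\tfrac1r\geq n+1$ says exactly that $r$ does not lie on the open short arc. Moving from $r$ toward $\tfrac1n$ one therefore stays off the short arc until reaching one of its endpoints $0$ or $\tfrac1{n+1}$; since these two endpoints are the only Farey neighbors of $\tfrac1n$ lying outside the open short arc, the first neighbor of $\tfrac1n$ that is met is one of them, so $s'\in\{0,\,\tfrac1{n+1}\}$ (and $s'=r$ can occur only when $r$ already equals $0$ or $\tfrac1{n+1}$).

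In either case $V'$ has convex boundary with two dividing curves of slope $0$ or $\tfrac1{n+1}$, and such a solid torus is a standard neighborhood of a Legendrian curve $L'$ — its core — of twisting number $0$ or $n+1$, respectively; when $n=-1$ the slope $\tfrac1{n+1}=\infty$ is an overtwisted boundary slope, so only $s'=0$ occurs. In all cases $L'$ is smoothly isotopic to the core of $V$, hence to $L$, and its twisting number exceeds $n$ (indeed is at least $n+1$); since the twisting number is invariant under Legendrian isotopy, $L'$ is not Legendrian isotopic to $L$, which is the assertion. I expect the main obstacle to be the convex-surface-theory bookkeeping around the bypass attachment — arranging it on the correct side, tracking orientations so that the Farey algorithm is applied to the correct arc, and invoking the classification of tight contact structures on solid tori at the end to identify $V'$ as a standard neighborhood realizing its core with twisting number $0$ or $n+1$. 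Granted the Farey Tessellation theorem, the slope computation itself is routine.
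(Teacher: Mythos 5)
The paper offers no proof of this lemma: it is quoted directly from Honda's classification paper \cite{H000} as background, so there is nothing internal to compare against. Your argument is essentially Honda's own: attach the bypass, read the new dividing slope off the Farey algorithm, and recognize the enlarged solid torus as a standard neighborhood of a Legendrian isotopic copy of $L$ with larger twisting. The combinatorial core of your write-up --- that for $n<0$ every Farey neighbor of $\frac{1}{n}$ lies on the closed arc between $0$ and $\frac{1}{n+1}$ containing $\frac{1}{n}$, that the corresponding open arc is $\{s:\frac{1}{s}<n+1\}$, and hence that the hypothesis $\frac{1}{r}\geq n+1$ forces the first neighbor encountered to be one of the two endpoints --- is correct and cleanly argued (note it does use $n<0$, as you flag; for $n>0$ the extreme neighbors are $0$ and $\frac{1}{n-1}$).

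There is, however, a genuine error in the endgame caused by an inconsistent choice of coordinates. You declare the meridian of $V$ to have slope $\infty$ and the fixed framing slope $0$, but the hypothesis that a standard neighborhood of a twisting-$n$ curve has boundary slope $\frac{1}{n}$ forces the opposite convention: meridian $=0$, longitude $=\infty$, since the dividing curves are parallel to $\lambda+n\mu=(n,1)$. Under the correct convention the outcome $s'=0$ is the \emph{meridional} slope: $V'$ then has meridional dividing curves and is overtwisted (a meridian disc whose Legendrian boundary is realized disjoint from $\Gamma_{\partial V'}$ is an overtwisted disc), not a standard neighborhood of a twisting-$0$ curve as you assert; symmetrically, in the $n=-1$ case the slope $\frac{1}{n+1}=\infty$ is the longitude, i.e.\ exactly the twisting-$0$ standard neighborhood you want, rather than an overtwisted slope to be excluded. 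To close the argument you must either pin down the side of attachment --- the bypass lies in the complement of $V$, and with the induced orientation of $\partial V$ the Farey algorithm uses the arc that enters the short arc through $\frac{1}{n+1}$, never through $0$ --- or simply observe that $s'=0$ would make the ambient contact structure overtwisted, which is excluded in every setting where the lemma is invoked (and renders the conclusion vacuous otherwise). Either way one lands on $s'=\frac{1}{n+1}$, and $V'$ is the standard neighborhood of a Legendrian curve smoothly isotopic to $L$ with twisting $n+1>n$, as required.
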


\begin{theorem}[The Edge Rounding Lemma~\cite{H000}] Let $\Sigma_1$ and $\Sigma_2$ be convex surfaces with collared Legendrian boundary which intersect transversely inside the ambient contact manifold along a common boundary Legendrian curve. Assume the neighborhood of the common boundary Legendrian is locally isomorphic to the neighborhood $N_{\epsilon} = \{x^2 + y^2 \leq \epsilon\}$ of $M = \mathbb{R}^2 × (\mathbb{R}/\mathbb{Z})$ with coordinates $(x, y, z)$
and contact $1$–form $\alpha = \sin(2pnz)dx + \cos(2pnz)dy$, for some $n \in \mathbb{Z}^+$, and
that $\Sigma_1 \cap N_{\epsilon} = \{x = 0, 0 \leq y \leq \epsilon\}$ and $\Sigma_2 \cap N_{\epsilon} = \{y = 0, 0 \leq x \leq \epsilon\}.$
If we join $\Sigma_1$ and $\Sigma_2$ along $x = y = 0$ and round the common edge so that the orientations of $\Sigma_1$ and $\Sigma_2$ are compatible and induce the same orientation
after rounding, the resulting
surface is convex, and the dividing curve $z=\frac{k}{2n}$ on $\Sigma_1$ will connect to the dividing curve $z=\frac{k}{2n}-\frac{1}{4n}$
on $\Sigma_2$ , where $k = 0,\cdots,2n-1.$
\end{theorem}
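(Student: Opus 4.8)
The plan is to reduce everything to the explicit local model and then carry out a direct computation. Since the rounding only modifies $\Sigma_1\cup\Sigma_2$ in an arbitrarily small neighborhood of the edge $\{x=y=0\}$, convexity and the dividing set away from the edge are unchanged, so one works entirely inside $N_\epsilon$ with $\alpha=\sin(2\pi nz)\,dx+\cos(2\pi nz)\,dy$. Concretely, for a small $\delta<\epsilon$ I would describe the rounded surface $\Sigma$ as the union of the flat half-planes $\{x=0,\ y\ge\delta\}$ and $\{y=0,\ x\ge\delta\}$ with the quarter of the cylinder $(x-\delta)^2+(y-\delta)^2=\delta^2$ lying in $\{0\le x,y\le\delta\}$; this cylinder is tangent to each plane along the circles $\{(0,\delta,z)\}$ and $\{(\delta,0,z)\}$, so $\Sigma$ is a smooth (after a harmless $C^\infty$-smoothing of the two gluing circles) embedded surface bulging into the quadrant $x,y>0$. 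Bulging into this quadrant rather than the opposite side is exactly what the hypothesis that the orientations of $\Sigma_1$ and $\Sigma_2$ match up after rounding selects.

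For convexity I would exhibit a single contact vector field transverse to all of $\Sigma$. A one-line Cartan-formula computation shows that $w:=(x-\delta)\partial_x+(y-\delta)\partial_y$ satisfies $\mathcal L_w\alpha=\alpha$, so $w$ is contact; and $w$ is transverse to $\Sigma$ everywhere, since on the cylindrical piece it is the outward radial field based at the core $\{(\delta,\delta,z)\}$, which $\Sigma$ avoids, while on the flat pieces it has the nonzero normal components $-\delta\,\partial_x$ and $-\delta\,\partial_y$. As transversality is an open condition, the same $w$ (or a tiny perturbation of it) remains transverse across the smoothing region, so $\Sigma$ is convex. Alternatively one could compute the characteristic foliation below and invoke Giroux's criterion, but the explicit $w$ is cleaner.

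It then remains to identify $\Gamma_\Sigma=\{p\in\Sigma:\alpha_p(w)=0\}$ and read off the connection rule. I would parametrize the cylindrical piece by $x=\delta+\delta\cos\theta$, $y=\delta+\delta\sin\theta$ with $\theta\in[\pi,3\pi/2]$, so that $\theta=\pi$ is the junction with $\Sigma_1$ and $\theta=3\pi/2$ the junction with $\Sigma_2$. A short computation gives $\alpha(w)=\delta\sin(2\pi nz+\theta)$, hence on this piece
\[ \Gamma_\Sigma=\Bigl\{\,z=\tfrac{j}{2n}-\tfrac{\theta}{2\pi n}\ :\ j\in\Z\,\Bigr\}. \]
At $\theta=\pi$ these are the curves $z=\tfrac{j-1}{2n}$, matching the dividing curves $z=\tfrac{k}{2n}$ of $\Sigma_1$; following the same curve to $\theta=3\pi/2$ lands it at $z=\tfrac{k}{2n}-\tfrac{1}{4n}$, which are precisely the dividing curves of $\Sigma_2$ (from $\alpha(\partial_y)=\cos 2\pi nz$). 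Since on the flat parts the dividing set is defined only up to isotopy, one may straighten $\Gamma_\Sigma$ to $\{z=k/2n\}$ on $\Sigma_1$ and to $\{z=k/2n-1/4n\}$ on $\Sigma_2$, and the computation above records exactly which curve is joined to which, giving the asserted shift by $\tfrac1{4n}$.

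The main obstacle is bookkeeping rather than ideas: pinning $\Sigma$ down as an honest smooth convex surface by choosing the interpolation between the flat and cylindrical regions so that $w$ stays transverse, and reconciling the three natural contact vector fields — $\partial_x$ on $\Sigma_1$, $\partial_y$ on $\Sigma_2$, and $w$ on the rounding — so that the conclusion is phrased with the normalization used in the statement. Once one commits to the explicit model these points are routine, and the genuine content, the shift by $1/4n$, is forced by the angular span $\pi/2$ of the quarter-cylinder: a dividing curve drifts in $z$ by $-\tfrac{\Delta\theta}{2\pi n}=-\tfrac1{4n}$ as it crosses the rounding.
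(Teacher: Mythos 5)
This is a background result quoted verbatim from Honda's \emph{On the classification of tight contact structures I}; the paper offers no proof of its own (the statement is simply followed by \verb"\qed"), so there is nothing internal to compare against. Your argument is correct and is essentially Honda's original one: work in the local model, round by a quarter-cylinder of radius $\delta$ centered on the axis $\{x=y=\delta\}$, and use the contact vector field $w=(x-\delta)\partial_x+(y-\delta)\partial_y$ (your identity $\mathcal L_w\alpha=\alpha$ checks out) to read off $\Gamma_\Sigma=\{\alpha(w)=0\}$; the computation $\alpha(w)=\delta\sin(2\pi nz+\theta)$ on the quarter-cylinder, with $\theta$ running from $\pi$ to $3\pi/2$, forces exactly the shift $-\tfrac{1}{4n}$. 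Two small points are worth making explicit if you write this up. First, the flat-cylinder union is only $C^1$ along the tangency circles $\{x=0,y=\delta\}$ and $\{x=\delta,y=0\}$; since $w$ is transverse with a consistent coorientation on both sides of each circle and transversality is $C^1$-open, any $C^1$-small smoothing preserves convexity, which is the justification your ``harmless smoothing'' needs. Second, on the flat pieces the $w$-dividing set is not literally $\{z=k/2n\}$ but the graphs $\tan(2\pi nz)=(y-\delta)/\delta$ on $\Sigma_1$ (and the analogous graphs on $\Sigma_2$); one should observe that each such curve drifts in $z$ by strictly less than $\tfrac{1}{4n}$, so it lies in a single band and is unambiguously isotopic to the horizontal circle it emanates from at $y=\delta$ (respectively $x=\delta$) --- this is what rules out an off-by-one error in matching the labels $k$ on the two sides. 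With those remarks your proof is complete.
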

\qed

 \section{Proof of Theorem~\ref{main}}

 We first provide some basic facts about continued fractions, set up our framing convention.

\subsection{Continued fractions}

A simple fact for the Farey graph says that, two points on $\partial\mathbb{D}$ correspond to an integral basis of
$\mathbb{Z}^2$ if and only if there is an edge in the Farey tessellation connecting them. Using this fact and a simple induction argument we obtain the following lemma.

\begin{lemma}\label{RelPrime}
Given
\begin{equation}
-\frac{q_i}{p_i}=[a^{i}_0,a^{i}_1,\cdots,a^{i}_{n_i}]=a^{i}_0 - \cfrac{1}{a^{i}_1 - \cfrac{1}{\ddots \, - \cfrac{1}{a^{i}_{n_i}}}}
\end{equation}

for some uniquely determined integers 

\[
a^{i}_0,a^{i}_1,\cdots,a^{i}_{n_i}\leq-2, ~~~~~~ i=1,2,3
\]  

Then 
\begin{enumerate}
\item For each $i=1,~2,~3$,  the numbers $-\frac{v_i}{u_i}=[a^{i}_0,a^{i}_1,\cdots,a^{i}_{n_{i}-1}]$ satisfy $p_{i}\geq u_{i}>0$, $q_{i}\geq v_{i}>0$ and $p_{i}v_{i}-q_{i}u_{i}=1$.
\item $\frac{p_{i}-q_{i}}{v_{i}-u_{i}}=[a^{i}_{n_i},a^{i}_{n_{i}-1},\cdots,a^{i}_{0}+1]$.
\end{enumerate} 
\end{lemma}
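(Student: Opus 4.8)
The plan is to prove both statements simultaneously by induction on $n_i$, using the correspondence between edges of the Farey graph and integral bases of $\mathbb{Z}^2$. I will drop the superscript $i$ and write $-q/p = [a_0, a_1, \dots, a_n]$ with all $a_j \leq -2$, and set $-v/u = [a_0, \dots, a_{n-1}]$.

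\textbf{Base case.} When $n = 0$ we have $-q/p = [a_0] = a_0$, so $q = -a_0 \geq 2$, $p = 1$; and $[\,]$ (the empty continued fraction, i.e.\ $[a_0,\dots,a_{n-1}]$ with $n-1 = -1$) should be interpreted so that $-v/u = \infty$ in the Farey sense, i.e.\ $(u,v) = (0,1)$. Then $p \geq u = 0$ (one must be slightly careful with the strict inequality $u > 0$; this is really the $n \geq 1$ statement, and for $n=0$ one checks the $n=1$ case by hand as the true base), $q \geq v = 1$, and $pv - qu = 1 \cdot 1 - q \cdot 0 = 1$. For part (2), $\frac{p-q}{v-u} = \frac{1-q}{1-0} = 1 - q = a_0 + 1 = [a_0 + 1] = [a_n, \dots, a_0 + 1]$, as desired. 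I would actually start the induction at $n = 1$ to make the strict positivity $u > 0$ genuine, checking $n=1$ directly.

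\textbf{Inductive step.} Suppose the lemma holds for continued fractions of length $n-1$. Write $-q/p = [a_0, a_1, \dots, a_n] = a_0 - \frac{1}{[a_1, \dots, a_n]}$. Set $-q'/p' = [a_1, \dots, a_n]$ and $-v'/u' = [a_1, \dots, a_{n-1}]$; by the inductive hypothesis $p' \geq u' > 0$, $q' \geq v' > 0$, $p'v' - q'u' = 1$, and $\frac{p'-q'}{v'-u'} = [a_n, \dots, a_1+1]$. Now I unwind the recursion: from $-q/p = a_0 + p'/q'$ one reads off $(p, q) = (q',\, q' \cdot(-a_0) - p') = (q', -a_0 q' - p')$ after clearing denominators and checking $\gcd$; similarly $-v/u = [a_0, \dots, a_{n-1}] = a_0 - 1/[a_1,\dots,a_{n-1}] = a_0 + u'/v'$ gives $(u, v) = (v', -a_0 v' - u')$. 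The inequalities $p \geq u$, $q \geq v$, $u > 0$ then follow from $q' \geq v' > 0$ and $-a_0 \geq 2$ by direct estimation (e.g.\ $p - u = q' - v' \geq 0$, and $q - v = (-a_0)(q' - v') - (p' - u') $, where one needs $-a_0 \geq 2$ to dominate $p' - u'$ — this is where the hypothesis $a_j \leq -2$ is essential). The determinant identity is bilinear bookkeeping: $pv - qu = q'(-a_0 v' - u') - (-a_0 q' - p')v' = -q'u' + p'v' = 1$. For part (2), compute $\frac{p - q}{v - u} = \frac{q' - (-a_0 q' - p')}{(-a_0 v' - u') - v'} = \frac{(a_0+1)q' + p'}{(a_0+1)v' + u'}$; I claim this equals $[a_n, a_{n-1}, \dots, a_1, a_0 + 1]$. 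Indeed, the inductive hypothesis gives $[a_n, \dots, a_1 + 1] = \frac{p' - q'}{v' - u'}$, and $[a_n, \dots, a_1, a_0+1]$ is obtained from the reversed continued fraction $[a_n,\dots,a_1+1]$ by the standard one-step extension formula; matching numerators and denominators against $\frac{(a_0+1)q' + p'}{(a_0+1)v' + u'}$ and using $p'v' - q'u' = 1$ to pin down the representatives completes the step. Throughout, the Farey-graph fact quoted before the lemma — that $\frac{p}{u}$ and $\frac{q}{v}$ (equivalently the vectors $(p,q), (u,v)$) form an integral basis iff $pv - qu = \pm 1$, combined with all continued-fraction convergents being edges in the tessellation — is what guarantees the $\gcd$ and sign conditions so that the "clearing denominators" steps produce honest coprime integers rather than merely rational identities.

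\textbf{Main obstacle.} The routine part is the determinant identity and unwinding $-q/p = a_0 - 1/[a_1,\dots,a_n]$ into linear recursions for $(p_i, q_i, u_i, v_i)$. The genuinely delicate point is part (2): one must correctly set up the \emph{reversed} continued fraction $[a_n, a_{n-1}, \dots, a_0 + 1]$ and verify that the one-step prolongation formula for continued fractions lines up with the recursion for $(p - q, v - u)$ — in particular tracking why the last entry becomes $a_0 + 1$ rather than $a_0$, which comes precisely from the $\frac{p-q}{v-u}$ (rather than $\frac{p}{v}$ or $\frac{q}{u}$) combination. Keeping the bookkeeping of which of $p,q,u,v$ plays which role under reversal, and confirming that the inequalities $p \geq u > 0$, $q \geq v > 0$ are preserved (using $a_j \leq -2$ crucially, since for $a_j = -1$ they would fail), is where I would be most careful.
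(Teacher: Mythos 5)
The paper itself gives no argument for this lemma (it is stated and immediately closed with a q.e.d., on the strength of the remark that it follows from ``a simple induction argument'' plus the Farey fact), so your induction on the length of the continued fraction is exactly the intended route; part (1) and the determinant identity are essentially fine, and you are right that the $n=0$ case must be treated as degenerate for the strict inequality $u>0$. There is, however, one genuine gap in part (1): from $q-v=(-a_0)(q'-v')-(p'-u')$ you cannot conclude $q-v\ge 0$ knowing only $q'-v'\ge 0$, $p'-u'\ge 0$ and $-a_0\ge 2$ --- nothing yet prevents $q'-v'$ from being smaller than $p'-u'$. You need to carry the extra inequality $q_i-v_i\ge p_i-u_i$ through the induction: it holds in the base case ($q-v=-a_0-1\ge 1=p-u$), and it propagates because $p-u=q'-v'$ and $q-v\ge 2(q'-v')-(p'-u')\ge q'-v'=p-u$. (Equivalently it follows a posteriori from part (2), since a continued fraction with all entries $\le -2$ except a last entry $\le -1$ has value $\le -1$.)

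The more serious issue is that part (2), as you have structured it, does not close. Your inductive hypothesis hands you only the single value $[a_n,\dots,a_2,a_1+1]=\frac{p'-q'}{v'-u'}$, but appending $a_0+1$ at the right-hand end requires the last \emph{two} convergents of the reversed word, namely $[a_n,\dots,a_1]$ and $[a_n,\dots,a_2]$, and these cannot be recovered from $\frac{p'-q'}{v'-u'}$ alone. The clean fix is to prove the reversal identity outright: writing $[b_0,\dots,b_m]$ as the ratio of the first column of $\prod_j M(b_j)$ with $M(b)=\left(\begin{smallmatrix}b&-1\\1&0\end{smallmatrix}\right)$, transposing and conjugating by $\mathrm{diag}(1,-1)$ gives $[a_n,\dots,a_1]=q'/v'$ and $[a_n,\dots,a_2]=p'/u'$ up to a common sign, after which the one-step append with last entry $a_0+1$ (using $M(a_0)^{-1}M(a_0+1)=\left(\begin{smallmatrix}1&0\\-1&1\end{smallmatrix}\right)$) lands exactly on $\frac{p-q}{v-u}$. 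Either do this, or strengthen the inductive hypothesis to track both convergents of the reversed fraction. Finally, a sign slip to correct: $(-a_0v'-u')-v'=-\bigl((a_0+1)v'+u'\bigr)$, so your displayed identity should read $\frac{p-q}{v-u}=-\frac{(a_0+1)q'+p'}{(a_0+1)v'+u'}$; on $[-2,-2]$ the right side is $-\frac{-1}{-1}=-1$, matching $\frac{2-3}{2-1}$, whereas your version gives $+1$.
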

\qed
\subsection{Framing}
Let $M=M(\frac{p_1}{q_1}, \frac{p_2}{q_2}, \frac{p_3}{q_3})$ be a Seifert fibered space over $S^2$ with three singular fibers (note that $M\cong M(-2, \frac{p_1}{q_1}, \frac{p_2}{q_2}-1, \frac{p_3}{q_3}-1)$). We choose product framing on $M$: Let $V_{i}$ be a tubular neighborhood of the singular fibers, $F_i$, $i=1,2,3$. Since $M\setminus V_{1}\cup V_{2}\cup V_{3}\cong \Sigma \times S^1$ where $\Sigma$ is a pair of pants. We choose an identification $-\partial(M \setminus V_{i})\cong \mathbb{R}^2/\mathbb{Z}^2$ by setting $(0,1)^{T}$ as the direction of the $S^1$ fiber and $(1,0)^{T}$ as the direction given by $-\partial({pt.} \times \Sigma)$. We choose a different identification  $\partial(V_{i})\cong\mathbb{R}^2/\mathbb{Z}^2$ by setting $(1,0)^{T}$ as the direction of the meridian. Given integers $u_i,v_i$ as in Lemma \ref{RelPrime}, we obtain the Seifert fibered manifold $M\cong(\Sigma \times S^1)\cup_{(A_{1}\cup A_{2}\cup A_{3})} (V_{1}\cup V_{2}\cup V_{3})$ where the attaching maps $A_{i}=\partial V_{i}\rightarrow -\partial(\Sigma\times S^1)_{i}$ are given by

\[
A_i=\left( \begin{array}{cc}
q_i & v_i  \\
-p_i & -u_i \\
\end{array} \right),
~~~ \textrm{with}~ p_iv_i-q_iu_i=1.\]

\bigskip

\begin{proof}[Proof of Theorem \ref{main}] 

Once again, by our framing convention, we consider $M(-2,\frac{p_1}{q_1},\frac{p_2}{q_2},\frac{p_3}{q_3})\cong M(\frac{p_1}{q_1},-\frac{q_2-p_2}{q_2},-\frac{q_3-p_3}{q_3})$ where $\frac{p_i}{q_i}\in \mathbb{Q}\cap(0,1)$ and $0<\frac{p_1}{q_1}\leq\frac{p_2}{q_2}\leq\frac{p_3}{q_3}<1$. 

Let $V_{i}$, for $i=1,2,3$, be standard neighborhoods of the singular fibers $F_i$ with $\textrm{slope}(\Gamma_{\partial V_{i}})=\frac{1}{n_i}$ with $n_i<0$. The attaching maps $A_{i}=\partial V_{i}\rightarrow -\partial(\Sigma\times S^1)_{i}$ are given by 

\[
A_1=\left( \begin{array}{cc}
q_1&v_1\\ -p_1&-u_1
\end{array} \right),\qquad A_2=\left( \begin{array}{cc}
q_2 & v_2 \\
q_2-p_2 & v_2-u_2
\end{array} \right), \qquad A_3=\left( \begin{array}{cc}
q_3 & v_3  \\ q_3-p_3 & v_3-u_3
\end{array} \right)
\]

where $-\frac{q_i}{p_i}=[a^{i}_0,a^{i}_1,\cdots,a^{i}_{n_{i}}]$ and $-\frac{v_i}{u_i}=[a^{i}_0,a^{i}_1,\cdots,a^{i}_{n_{i}-1}]$.  

\bigskip

When measured in $-\partial(M\setminus V_i)$ slopes $s_i,~ i=1,2,3$ are

$s_1=\frac{-p_{1}n_1-u_1}{q_{1}n_{1}+v_1}=-\frac{p_1}{q_1}+\frac{1}{q_{1}(n_{1}q_{1}+v_{1})}$. So, $n_{1}<0$ implies that $-1=\left\lfloor -\frac{p_1}{q_1} \right\rfloor < s_1 <-\frac{p_1}{q_1}$.

For $i=2,3$,  $s_i=\frac{(q_i-p_i)n_{i}+(v_{i}-u_{i})}{q_{i}n_{i}+v_i}$. So, $n_{i}<0$ implies that $0=\left\lfloor -\frac{q_{i}-p_{i}}{q_i} \right\rfloor \leq s_i <-\frac{q_{i}-p_{i}}{q_{i}}$.

In what follows we prove that by finding enough bypasses we can thicken $V_i$'s to have boundary slopes $s_1=-1$ and $s_2=s_3=0$.

First note that after a small isotopy in the neighborhood $V_i$, we can make the ruling curves on $-\partial(M\setminus V_i)$ to have slope infinite, in short these curves will be called vertical. Let $\mathcal{A}$ be an annulus with boundary being Legendrian vertical ruling curves along $V_1$ and $V_2$, such an annulus in short will be called vertical. Note that since $t(\xi)<0$, we can make $\mathcal{A}$ convex. There are two cases.

\vspace{2mm}

{\it Case 1}: If $q_{1}n_{1}+v_1 \neq q_{2}n_{2}+v_2$, the dividing set of $\mathcal{A}$ has boundary parallel arcs on $\partial V_1$ and/or on $\partial V_2$ side. By attaching these bypasses, we can increase the twisting numbers $n_1$, $n_2$ up to $-1$ because of our choice of ruling slopes and the Twist Number Lemma.  

\vspace{2mm}

{\it Case 2}: If $q_{1}n_{1}+v_1 = q_{2}n_{2}+v_2$ and $\mathcal{A} $ has no boundary parallel arcs, then cut along $\mathcal{A}$ and round the corners to get a smooth torus $\partial (M\setminus(V_{1}\cup V_{2}\cup \mathcal{A}))$ that is isotopic to a neighborhood of $F_{3}$. The Edge Rounding Lemma computes the slope as

\[
\begin{split}
s(\Gamma_{\partial(M\setminus V_1 \cup V_2 \cup \mathcal{A})})=-\frac{p_{1}n_1+u_1}{q_{1}n_{1}+v_1} + \frac{(q_2-p_2)n_{2}+v_2-u_2}{q_2n_{2}+v_2} - \frac{1}{q_1n_{1}+v_1}=\\ \frac{(q_1q_2-q_1p_2-p_1q_2)n_1+v_1q_2-v_1p_2-u_1q_2-q_2+1}{q_1q_2n_1+v_1q_2}=\frac{\alpha}{\beta}.
\end{split}
\]    
 
when measured in $\partial V_3$ we get:

\begin{equation}\label{slope}
s_{n_1}=s(\Gamma_{\partial V_3}) = A^{-1}_{3}(-\beta,\alpha)=\frac{(An_1+F)q_3}{(Cn_1+D)v_3}.
\end{equation}

where 

\[
A=\frac{p_1}{q_1}+\frac{p_2}{q_2}+\frac{p_3}{q_3}-2,
\]

\[
 C=2-\frac{p_1}{q_1}-\frac{p_2}{q_2}-\frac{u_3}{v_3},
\]

\[
F=(\frac{p_3}{q_3}+\frac{p_2}{q_2}-2)\frac{v_1}{q_1}+\frac{u_1q_2+q_2-1}{q_1q_2} ~~\textrm{and}~  
D=(2-\frac{p_2}{q_2}-\frac{u_3}{v_3})\frac{v_1}{q_1}-\frac{u_1q_2+q_2-1}{q_1q_2}.
\]
\vspace{2mm}

Note that by our assumption in Theorem~\ref{main} we will consider only the following cases: 

\vspace{1mm}

\begin{enumerate}

\item\label{h} $A\geq \frac{1}{4}$ ( which implies $C<0$) 

\item\label{hh} $A<0$ ( which implies $C>0$).

\item\label{hhh} $\frac{p_1}{q_1}=\frac{1}{2},~ \frac{p_2}{q_2}=\frac{2}{3}$ and $\frac{p_3}{q_3}=\frac{k}{k+1}$, $k \geq 6$

\end{enumerate}

\vspace{1mm}

Before analyzing the cases above in detail we make the following observation which will help us to bypass much of calculations.

\subsection{Key shortcut:}\label{shortcut} Suppose for any $n_1<0$, we find a convex neighborhood of $F_3$ in $V_3$ with slope $\frac{p_3-q_3}{v_3-u_3}$. For notational ease we continue denoting this neighborhood by $V_3$. Now, the slope, when measured in $-\partial (M\setminus V_3)$, becomes $s_3=0$. In particular, there is a Legendrian curve $L$ isotopic to a regular fiber with twisting $-1$, which is the maximal twisting of a tight contact structure $\xi$ on $M$ \cite[Corollary $4.6$]{G008}. We shall use this information to find thickenings of $V_1$ and $V_2$ so that their boundary slopes are $s_1=-1$ and $s_2=0$. We first put a vertical annulus $\mathcal A$ between $V_1$ and $V_3$ such that $\partial \mathcal A=F_1\cup L$. Note that $\Gamma_{\mathcal A}$ cannot have boundary curves on the $V_3$ side. So, since $|q_1n_1+v_1|>1$ whenever $n_1<-1$, there are boundary parallel curves-- bypasses-- in the dividing set of $A_1$ on the $V_1$ side. By attaching these bypasses, we can increase the twisting number up to $n_1=-1$. If we still have $|v_1-q_1|>1$, then there are more bypasses. We keep continue attaching these bypasses until $s_1=-1$ or $|v_1-q_1|=1$. The latter possibility implies that $|p_1-u_1|=1$, as $0<p_1<q_1$,~ $0<u_1<v_1$ and $p_1v_1-u_1q_1=1$. So, in either case we can thicken $V_1$ so that $s_1=-1$. Almost the identical argument shows that we can thicken $V_2$ so that  $s_2=0$. Hence we conclude that $M(\frac{p_1}{q_1}, \frac{q_2-p_2}{q_2},\frac{q_3-p_3}{q_3})=\Sigma \times S^1\cup_{A_1\cup A_2\cup A_3}(V_1\cup V_2 \cup V_3)$ with $s_1=-1$ and $s_2=s_3=0$ measuring in $-\partial(M\setminus V_{i})$,~ $i=1,2,3$. We will now count total possible number of tight contact structures, up to isotopy, on $M$ from each of the pieces. By \cite[Lemma 5.1--3b]{H00}  there is a unique tight contact structure on $\Sigma \times S^1$. By Lemma~\ref{RelPrime}-$(2)$ the slope of $\Gamma_{\partial V_1}$ is  $\bigl(\begin{smallmatrix}
-u_1&-v_1\\ p_1&q_1
\end{smallmatrix} \bigr)\bigl(\begin{smallmatrix}
1\\-1
\end{smallmatrix} \bigr)=\frac{p_1-q_1}{v_1-u_1}=[a^{1}_{n_1},a^{1}_{n_1-1},\cdots,a^{1}_1,a^{1}_{0}+1]$. Thus,  by the classification of tight contact structures on the solid tori \cite[Theorem $4.16$]{H000}, there are exactly $|(a^{1}_0+1)(a^{1}_1+1) \cdots (a^{1}_{n_{1}+1})|=T(r_1)$ tight contact structures on $V_1$. Similarly on $V_2$ and $V_3$ (as $\bigl(\begin{smallmatrix}
v_i-u_i&-v_i\\ p_i-q_i&q_i
\end{smallmatrix} \bigr)\bigl(\begin{smallmatrix}
1\\0
\end{smallmatrix} \bigr)=\frac{p_i-q_i}{v_i-u_i}=[a^{i}_{n_i},a^{i}_{n_i-1},\cdots,a^{i}_1,a^{i}_{0}+1]$; $i=2,3$), there are exactly $T(r_2)$ and $T(r_3)$ tight contact structures, respectively. Hence, under the assumption that there is a neighborhood $V_3$ of $F_3$ with slope $\frac{p_3-q_3}{v_3-u_3}$, we obtain an upper bound on the number of tight contact structure on $M$. Finally this upper bound is achieved by counting all distinct Stein fillable structures on $M$ \cite[Theorem~$1.2$]{LM97} which are obtained by realizing the diagram in Figure~\ref{fig:large} by Legendrian surgeries. 

\vspace{2mm}

\vspace{2mm}

\begin{proof}[Proof of \ref{h} and \ref{hh}] Note that for each of the cases \ref{h} and \ref{hh} we have $s_{n_1}$ is increasing and limits to $\frac{Aq_3}{Cv_3}<-1$ as $n_1\rightarrow -\infty$. Moreover $\frac{Aq_3}{Cv_3}=\frac{p_3-(2-\frac{p_1}{q_1}-\frac{p_2}{q_2})q_3}{(2-\frac{p_1}{q_1}-\frac{p_2}{q_2})v_3-u_3}\leq\frac{p_3-q_3}{v_3-u_3}$ if and only if either $\frac{p_1}{q_1}+\frac{p_2}{q_2}\leq1$ or $A=\frac{p_1}{q_1}+\frac{p_2}{q_2}+\frac{p_3}{q_3}-2>0$ and $C=2-\frac{p_1}{q_1}-\frac{p_2}{q_2}-\frac{u_3}{v_3}<0$. So, by using this inequality and Theorem $4.16$ of \cite{H000} we find the desired neighborhood $V_3$, and hence prove Theorem~\ref{main}, for the case of \ref{h} and of \ref{hh} when $\frac{p_1}{q_1}+\frac{p_2}{q_2}\leq1$. We want to note that it is easy to derive \cite[Theorem~$1.1$]{LS07} that for $M(-2, \frac{p_1}{q_1}, \frac{p_2}{q_2}, \frac{p_3}{q_3})$ to be an $L$--space, it is necessary that $\frac{p_1}{q_1}+ \frac{p_2}{q_2}+ \frac{p_3}{q_3}<2$, and sufficient that $\frac{p_1}{q_1}+ \frac{p_2}{q_2}+ \frac{p_3}{q_3}<2$. In particular, the condition that $\frac{p_1}{q_1}+\frac{p_2}{q_2}\leq1$ implies that the underlying three manifolds $M(-2,\frac{p_1}{q_1},\frac{p_2}{q_2},\frac{p_3}{q_3})$ are $L$-spaces and reproves the classification of tight structures on such manifolds which was first obtained by Ghiggini in \cite{G008}. But as noted above this condition does not characterize all $M(-2, \frac{p_1}{q_1}, \frac{p_2}{q_2}, \frac{p_3}{q_3})$ which are $L$-spaces. 

So it is left to analyze the case of \ref{hh} when $\frac{p_1}{q_1}+\frac{p_2}{q_2}>1$. We first give an explicit demonstration of the classification of tight contact structures on a particular family of small Seifert fibered spaces which are not $L$-spaces: $M_{r}=M(-2,\frac{1}{2},\frac{2}{3},r)$ where $r=\frac{p}{q}\in\mathbb{Q}\cap [\frac{4}{5},\frac{5}{6})$ and $-\frac{q}{p}=[a_1,a_2,\cdots,a_{m}]$.

\bigskip

\begin{lemma}\label{guidelemma}
On $M_{r}$, up to isotopy, there are exactly $|(a_{1}+1)(a_{2}+1)\cdots(a_{m}+1)|$ tight contact structures, which are all Stein fillable.
\end{lemma}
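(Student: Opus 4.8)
\textbf{Proof proposal for Lemma~\ref{guidelemma}.}
The plan is to follow the ``key shortcut'' of Section~\ref{shortcut}: it suffices to show that for \emph{every} $n_1<0$ one can thicken the neighborhood $V_3$ of the third singular fiber until its boundary slope, measured on $\partial V_3$, equals $\frac{p_3-q_3}{v_3-u_3}$ (equivalently, $s_3=0$ measured in $-\partial(M\setminus V_3)$). Once this is done, the shortcut hands us the upper bound $|(a_1+1)\cdots(a_m+1)|$ on the number of tight contact structures, and the matching lower bound comes from exhibiting that many distinct Stein fillable structures via Legendrian surgery on the link of Figure~\ref{fig:large}, using \cite[Theorem~1.2]{LM97} to see they are pairwise non-isotopic. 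So the entire content is the thickening claim for $V_3$ in the regime $\frac{p_1}{q_1}+\frac{p_2}{q_2}=\frac12+\frac23=\frac76>1$, which is precisely the subcase of \ref{hh} left open above.

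First I would record the relevant numerology for $M_r = M(-2,\tfrac12,\tfrac23,r)$ with $r=\frac pq\in[\tfrac45,\tfrac56)$: here $\frac{p_1}{q_1}=\frac12$, $\frac{p_2}{q_2}=\frac23$, so $A = \frac12+\frac23+\frac pq - 2 = \frac pq - \frac56 < 0$ since $r<\frac56$, confirming we are in case \ref{hh}; also $C = 2-\frac12-\frac23-\frac{u_3}{v_3} = \frac56 - \frac{u_3}{v_3}>0$ (as $u_3/v_3 < p_3/q_3 = p/q < \frac56$ by Lemma~\ref{RelPrime}(1)). Then I would plug these values into the formula \eqref{slope} for $s_{n_1}$ and analyze the sequence of slopes $\{s_{n_1}\}_{n_1<0}$ on $\partial V_3$: since $A<0$ and $C>0$, $s_{n_1}$ is monotone in $n_1$ and, as $n_1\to-\infty$, limits to $\frac{Aq_3}{Cv_3} = \frac{p_3-\frac56 q_3}{\frac56 v_3 - u_3}$. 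The decisive point is that, because $r\geq\frac45$, this limiting value satisfies $\frac{Aq_3}{Cv_3}\ge \frac{p_3-q_3}{v_3-u_3}$ — i.e.\ the target slope $\frac{p_3-q_3}{v_3-u_3}$ lies on the correct side of, or is reached by, the slopes $s_{n_1}$ — so that $\frac{p_3-q_3}{v_3-u_3}$ is visible as a boundary slope of a convex torus bounding a solid torus inside $V_3$. Concretely, for each fixed $n_1$ I would use the classification of tight contact structures on solid tori \cite[Theorem~4.16]{H000}, together with the Farey-tessellation description of bypass attachments, to pass from the slope $s_{n_1}$ to the slope $\frac{p_3-q_3}{v_3-u_3}$: one checks that $\frac{p_3-q_3}{v_3-u_3}$ and $s_{n_1}$ are connected by an arc in the Farey graph along which a continued-fraction block of the appropriate sign can be inserted, so that a standard neighborhood of $F_3$ with the target slope is contained in $V_3$. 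Replacing $V_3$ by this larger solid torus gives $s_3=0$, exactly as required to invoke the shortcut.

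After the thickening of $V_3$ is in hand, the shortcut of Section~\ref{shortcut} applies verbatim: put vertical convex annuli between $V_1,V_3$ and between $V_2,V_3$, use the Imbalance Principle and the Twist Number Lemma (legitimate since $t(\xi)<0$ by \cite[Corollary~4.6]{G008}) to peel off boundary-parallel bypasses and thicken $V_1$ until $s_1=-1$ and $V_2$ until $s_2=0$, observing that the numerical side conditions $|v_1-q_1|=1\Rightarrow|p_1-u_1|=1$ (here trivially, since $\frac{p_1}{q_1}=\frac12$ means $V_1$ is already essentially maximal) close the induction. The decomposition $M_r=\Sigma\times S^1\cup_{A_1\cup A_2\cup A_3}(V_1\cup V_2\cup V_3)$ with $s_1=-1$, $s_2=s_3=0$ then yields, via \cite[Lemma~5.1]{H00} for the unique tight structure on $\Sigma\times S^1$ and \cite[Theorem~4.16]{H000} for the solid tori, the upper bound $T(\tfrac12)\,T(\tfrac23)\,T(r) = 1\cdot 1\cdot |(a_1+1)\cdots(a_m+1)|$, which I then match below by the Legendrian surgery construction.

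I expect the main obstacle to be the thickening of $V_3$ itself, specifically verifying that the target slope $\frac{p_3-q_3}{v_3-u_3}$ is genuinely attainable — that the relevant Farey arc between $s_{n_1}$ and $\frac{p_3-q_3}{v_3-u_3}$ does not force an overtwisted disk and that the continued-fraction bookkeeping (keeping all coefficients $\le -2$, or the dual block appropriately signed) is consistent for every $n_1<0$ and every $r\in[\tfrac45,\tfrac56)$. The bound $r\ge\frac45$ is what makes the limiting inequality $\frac{Aq_3}{Cv_3}\ge\frac{p_3-q_3}{v_3-u_3}$ hold, and tracking exactly why $\frac45$ (and not some larger threshold) suffices is the delicate computation; everything downstream is the now-standard gluing-and-counting machinery already deployed in the proof of \ref{h} and \ref{hh}.
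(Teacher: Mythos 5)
There is a genuine gap at the central step. You propose to show that for \emph{every} $n_1<0$ the solid torus $V_3$ with boundary slope $s_{n_1}$ contains a convex neighborhood of $F_3$ with the target slope $\frac{p_3-q_3}{v_3-u_3}$, and you justify this by the inequality $\frac{Aq_3}{Cv_3}\geq\frac{p_3-q_3}{v_3-u_3}$. That inequality points the wrong way: in the cases where the direct thickening does work (case~\ref{h}, and case~\ref{hh} with $\frac{p_1}{q_1}+\frac{p_2}{q_2}\leq 1$) the argument needs $s_{n_1}<\frac{Aq_3}{Cv_3}\leq\frac{p_3-q_3}{v_3-u_3}$, so that the target slope lies in the range swept out between $\partial V_3$ and the core and Honda's Theorem~$4.16$ applies. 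Here the limit $\frac{Aq_3}{Cv_3}$ sits on the far side of the target, and the inequality one actually needs, $s_{n_1}\leq\frac{p_3-q_3}{v_3-u_3}$, fails for most $n_1$: for $r=\frac45$ one has $p=4$, $q=5$, $v=4$, $u=3$, hence $s_{n_1}=\frac{-n_1+2}{2n_1-1}$ and the target slope is $-1$, yet $s_{-2}=-\frac{4}{5}>-1$ (and $s_{n_1}$ increases to $-\frac12$). The paper flags exactly this point: ``it is not necessarily true that $s_{n_1}\leq\frac{p-q}{v-u}$ for all $n_1<0$,'' so the route you describe cannot be carried out for all $n_1$, and the condition $r\geq\frac45$ is not used the way you suggest.

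The actual proof takes a detour. Because $\frac{p}{q}\in[\frac45,\frac56)$ one does have $s_{n_1}<\frac{6p-5q}{5v-6u}$ for all $n_1<0$, so $V_3$ contains a convex neighborhood $V_3'$ of $F_3$ with this \emph{intermediate} slope, which equals $\frac16$ in the coordinates of $-\partial(M\setminus V_3')$. A vertical annulus from $V_1$ to $V_3'$ then yields bypasses whenever $|2n_1+1|>6$, pinning $n_1\geq -3$ (similarly $n_2\geq -2$), so the slopes become $s_1=-\frac35$, $s_2=\frac14$, $s_3=\frac16$; a further vertical annulus between $V_1$ and $V_2$ produces the bypasses needed to reach $s_1=-1$ and $s_2=0$; and only then does cutting along that annulus (which has no boundary-parallel dividing arcs since $t(\xi)<0$) and edge-rounding show that $\partial(M\setminus V_1\cup V_2\cup\mathcal A)$, isotopic to $\partial(M\setminus V_3)$, has slope $0$. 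At that point the decomposition and the count you describe (unique tight structure on $\Sigma\times S^1$, standard $V_1$ and $V_2$, and $|(a_1+1)\cdots(a_m+1)|$ structures on $V_3$ from $\frac{p-q}{v-u}=[a_m,\dots,a_2,a_1+1]$, realized by Legendrian surgery and distinguished by Lisca--Mati\'c) goes through as you state; it is the normalization step feeding into it that your proposal gets wrong.
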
  
     
We remark that the lemma, in particular, says that on $M_{n}=M(-2,\frac{1}{2},\frac{2}{3},\frac{5n-1}{6n-1})$, $n\geq2$ (note $-\frac{6n-1}{5n-1}=[-2,-2,-2,-2,-3,-2,\cdots,-2]$, where $-3$ is followed by $(n-2)$,  $-2$'s) there are, up to isotopy, exactly two tight contact structures, which are both Stein fillable. It is interesting to compare this infinite family with its orientation reversal one: $-M_{n}=M(-1,\frac{1}{2},\frac{1}{3},\frac{n}{6n-1})$, which has been very instrumental in our understanding of three dimensional contact geometry, by work of Etnyre and Honda \cite{EH01}, Lisca-Mati\'{c} \cite{LM97} and Ghiggini \cite{G08}. Finally, recent work of Ghiggini and Van Horn-Morris \cite{GM09}, shows that on $-M_{n}$ there are, up to isotopy, exactly $\frac{n(n-1)}{2}$ tight contact structures, which are all strongly fillable but at least one of them is not Stein fillable.

\begin{proof}
Let $\xi$ be a tight contact structure on $M(-2,\frac{1}{2},\frac{2}{3},\frac{p}{q})\cong M(\frac{1}{2},-\frac{1}{3},-\frac{q-p}{q})$. 

When measured in $-\partial(M\setminus V_i)$ slopes are
\[
s_1=-\frac{n_1}{2n_{1}+1},~ s_2=\frac{n_{2}+1}{3n_{2}+2},~~s_3=\frac{(q-p)n_{3}+v-u}{qn_{3}+v}.
\]

After cutting and rounding a standard vertical annulus between $V_1$ and $V_2$ as explained at the beginning above, we get a neighborhood $V_3$ of $F_3$ such that its slope, by Formula~\ref{slope} is   

\[
s_{n_1}=s(\Gamma_{\partial V_3})=\frac{(6p-5q)n_{1}+3p-2q}{(5v-6u)n_{1}+2v-3u}.
\]

This is the place that it is not necessarily true that $s_{n_1}\leq \frac{p-q}{v-u}$ for all $n_1<0$. That is, it is not immediate that there is a thickening of $V_3$ such that its slope when measured with respect to $-\partial(M_{r}\setminus V_3)$ is $s_3=0$. But since $\frac{p}{q}\in \mathbb{Q}\cap[\frac{4}{5}, \frac{5}{6})$, $p\geq u>0$, $q\geq v>0$ and $q-p \geq v-u$, we get that $\frac{(6p-5q)n_{1}+3p-2q}{(5v-6u)n_{1}+2v-3u}<\frac{6p-5q}{5v-6u}$ for all $n_1<0$. So, by \cite[Theorem $4.16$]{H000}, we can find a convex neighborhood $V'_{3}\subset V_{3}$ of the singular fiber $F_3$ such that $s(\Gamma_{\partial V'_{3}})=\frac{6p-5q}{5v-6u}$. This slope becomes $\frac{1}{6}$ when measured in coordinates of $-\partial(M\setminus V'_3)$. We now take a vertical annulus between $V_{1}$ and $V'_{3}$. Note that as long as $n_{1}\leq -4$ we have $|2n_{1}+1|>6$ and hence the convex annulus will have a bypass on $V_{1}$ side. By the Twist Number Lemma attaching this bypass will increase the twisting $n_{1}$ to $-3$. Similarly we can increase the twisting $n_{2}$ to $-2$. So, slopes in the coordinates of $-\partial(M\setminus V_i)$ become $s_1=-\frac{3}{5}, s_{2}=\frac{1}{4}$ and $s_{3}=\frac{1}{6}$. Observe that yet another vertical annulus $\mathcal A$ between $V_{1}$ and $V_{2}$ will result in bypasses which allow us to thicken $V_1$ and $V_2$ so that their boundary slopes become $s_1=-1$ and $s_2=0$. At this point we can assume that the vertical annulus $\mathcal A$ has no boundary parallel arcs in its dividing set because $t(\xi)<0$ for any tight structure on $M_{r}$. We cut along $\mathcal A$ and round the corners of $(M\setminus V_1 \cup V_2 \cup\mathcal A)$. Now $\partial(M\setminus V_1 \cup V_2 \cup \mathcal A)$ is smoothly isotopic to $\partial(M\setminus V_3)$ and has slope, by the Edge Rounding Lemma, $0$. Since the solid tori $V_1$ and $V_2$ have boundary slopes $A^{-1}_{1}(-1,1)=-1$ and $A^{-1}_{2}(1,0)=\infty$, respectively, they are standard neighborhoods of Legendrian (singular) fibers. So each carries a unique tight contact structure. On the other hand, since $A^{-1}_{3}(1,0)=\frac{p-q}{v-u}$ and, by the second part of Lemma \ref{RelPrime} $\frac{p-q}{v-u}=[a_{m},a_{m-1},\cdots,a_2, a_{1}+1]$, we conclude, by using the classification of tight contact structures on solid tori \cite[Theorem $2.3$]{H000} that $V_3$ admits exactly $|(a_{m}+1)(a_{m-1}+1)\cdots(a_2+1)(a_{1}+1)|$ tight contact structures. Therefore, as explined in \ref{shortcut}, $M_{r}$ admits exactly $|(a_{m}+1)(a_{m-1}+1)\cdots(a_2+1)(a_{1}+1)|$ tight contact structures, up to isotopy which are all Stein fillable.      
\end{proof}

 We now return the general case. Note that $s_{n_1}<\frac{Aq_3}{Cv_3}$ for every $n_1<0$, so there is a convex neighborhood $V'_3$ of $F_3$ such that $s(\Gamma_{\partial V'_{3}})=\frac{Aq_3}{Cv_3}$. Now the slope, when measured in $-\partial (M\setminus V'_3)$, becomes $s_3=\bigl(\begin{smallmatrix}
q_3&v_3\\ q_3-p_3& v_3-u_3
\end{smallmatrix} \bigr)\bigl(\begin{smallmatrix}
Cv_3\\Aq_3
\end{smallmatrix} \bigr)=\frac{p_1}{q_1}+\frac{p_2}{q_2}-1>0$. 

{\it Step I:---} Assume first that $q_1=q_2=q$, we then have $s_1=\frac{-p_1n_1-u_1}{qn_{1}+v_1}$, $s_2=\frac{(q-p_2)n_{2}+(v-u_2)}{qn_{2}+v_2}$ and $s_3=\frac{p_1+p_2-q}{q}$. Put a vertical annulus between $V_1$ and $V'_3$. One can easily see that $|qn_1+v|>q$ whenever $n_1<-1$. So by the Imbalance Principle there are bypasses at $V_1$ side. By the Twist Number Lemma from \cite{H000} we can increase the twisting number $n_1$ up to $-1$. Similarly we can increase the twisting number $n_2$ up to $-1$. So with this thickening at hand slopes become $s_1=\frac{p_1-u_1}{v_1-q}$ and $s_2=\frac{p_2-q+v-u_2}{v_2-q}$. Now a vertical annulus between $V_1$ and $V_2$ will have boundary parallel arcs if $|v_1-q|\neq |v_2-q|<$. From which we either obtain that $s_1=-1$ and $s_2=0$ or $v_1=v_2$. If the former happens then we are done by \ref{shortcut}. So we can assume without loss of generality that $v_1=v_2=v$. This time a vertical annulus between $V_1$ and $V'_3$, as $|v-q|<q$, by the Imbalance Principle, must have bypasses along $V'_3$ side. Attaching those bypasses and tracing their effect via the Farey Tesellation we eventually get first that $s_3=\frac{1}{q'}$ and then a further bypass (which is still available) gives that $s_3=0$. Now using the the same argument as in \ref{shortcut}, we obtain the desired classification.


{\it Step II:---} Assume now that $q_1 \neq q_2$. So, $s_3=\frac{p_1q_2+p_2q_1-q_1q_2}{q_1q_2}>0$. We now put a vertical annulus between $V_1$ and $V'_3$. Since $|q_1n_1+v_1|>q_1q_2$ whenever $n_1<-q_2$,  by the Imbalance Principle there are bypasses at $V_1$ side which by Twist Number Lemma from \cite{H000} increase the twisting number $n_1$ up to $-q_2$. Similarly we can increase the twisting number $n_2$ up to $-q_1$ and slopes become $s_1=\frac{p_1q_2+u_1}{q_1q_2-v_1}$ and $s_2=\frac{q_1q_2-q_1p_2+u_2-v_2}{q_1q_2-v_2}$. If $v_1\neq v_2$, then $|v_1-q_1q_2|\neq|v_2-q_1q_2|$. So a vertical annulus between $V_1$ and $V_2$ will have bypasses. Once again by the Twist Number Lemma we increase the twisting numbers $n_1$, $n_2$ up to $-q_2+l$, $-q_1+k$ for $1 \leq l\leq q_2-1$ and $1\leq k\leq q_1-1$. As in the case above, from this we either obtain that $s_1=-1$ and $s_2=0$, and be done with the proof or $v_1=v_2=v$. In the latter case we have $|v-q_1q_2|=|v-q_1q_2|\neq q_1q_2$. Hence a vertical annulus between $V_1$ (or $V_2$) and $V'_3$ will produce bypasses on $V'_3$ side that changes its slopes, again referring to the Farey Tesellation, to sequence of fractions ending at $\frac{1}{q'}$. Since $|v-q_1q_2|\neq q'$ is still the case, we get one more bypass, and conclude that $s_3=0$. This finishes the proof.

\end{proof}

\begin{proof}[Proof of ~\ref{hhh}] 
We have already done most of the work at the beginning of the proof. We substitute $\frac{p_1}{q_1}=\frac{1}{2}, \frac{p_2}{q_2}=\frac{2}{3}$ and $\frac{p_3}{q_3}=\frac{k}{k+1}$ in there to obtain that the slopes $s_i,~ i=1,2,3$, measured with respect to $-\partial(M\setminus V_i)$, are $s_1=-\frac{n_1}{2n_1+1}, ~ s_2=\frac{n_2+1}{3n_2+2}$ and $s_3=\frac{n_3+1}{(k+1)n_3+k}$. Moreover,  the new boundary slope, after cutting and rounding a standard vertical annulus between $V_1$ and $V_2$, for $\partial V_3$, by slope Formula ~\ref{slope}, is $s_{n_1}=-\frac{(k-5)n_1+k-2}{(k-6)n_1+k-3}$. Recall we assumed that $k\geq 6$. There are few cases to consider. If $k\geq 8$, then it is easy to see that $s_{n_1}\leq -1$ for all $n_1<0$. In particular there exist a neighborhood $V'_3\subset V_3$ with boundary slope $s(\Gamma_{\partial V'_3})=-1$. When measured with respect to $-\partial(M\setminus V'_3)$, the slope becomes $0$. Now by the argument in \ref{shortcut} (note $\frac{p_3-q_3}{v_3-u_3}$=-1), we finish the proof. When $k=6$ or $7$, one need more care. For example it is not true that we can find a convex neighborhood of $F_3$ with the boundary slope $-1$ for all $n_1<0$. In some sense this is primary reason that we cannot extend our classification to include all the values of the Siefert invariants $\frac{p_i}{q_i}$. Indeed, in the next section we will exhibit an infinite family of examples for which the classification is very different than what we have in Theorem~\ref{main}. Nevertheless, for $k=6,7$, it is easy to check that, similar to the arguments as in Lemma~\ref{guidelemma}, the classification can be obtained as claimed (indeed $k=6$ corresponds to $n=1$ in Theorem~\ref{main1}).

\end{proof}

\end{proof}

\section{Proof of Theorem~\ref{main1}}

Let $M_n$ denote the manifold $M(-2;\frac{1}{2}, \frac{2}{3},\frac{5n+1}{6n+1})$. Our proof of the fact that $M_n$ carries exactly $\frac{n(n+1)}{2}$ tight contact structures, up to isotopy, for any $n\geq1$ is very much parallel to beautiful work of Ghiggini and Van Horn-Morris in \cite{GM09}. We will essentially explain main points and remark necessary changes to conclude our proof. The proof starts with explaining why is that $M_n$ has at most $\frac{n(n+1)}{2}$ tight contact structures. This first part is classic convex surface theory argument as in \cite{EH01}, \cite{W06}, \cite{G008}, \cite{GLS06}, \cite{GLS07} and \cite{GM09}. The second part is devoted to detect the claimed number of tight contact structures which involves two important ideas; Heegaard Floer homology with twisted coefficients and open book decompositions. We refer the reader \cite[Section $3$]{GM09} for Heegaard Floer homology with twisted coefficients basics. We still assume the terminology from the previous section.

\subsection{Upper Bound.} We prove that $M_{n}$ admits at most $\frac{n(n+1)}{2}$ tight contact structures, up to isotopy, for any $n\geq1$. The manifold $M_n$ can also be described as $M(\frac{1}{2}, -\frac{1}{3}, -\frac{n}{6n+1})$. In particular, we can decompose the manifold $M_n$ as

\[
M_n\cong(\Sigma \times S^1)\cup_{(A_{1}\cup A_{2}\cup A_{3})} (V_{1}\cup V_{2}\cup V_{3})
\]
 where the attaching maps $A_{i}=\partial V_{i}\rightarrow -\partial(\Sigma\times S^1)_{i}$ are given by

\[
A_1=\left( \begin{array}{cc}
2&1\\ -1&0
\end{array} \right), \qquad A_2=\left( \begin{array}{cc}
3 & 2 \\1 & 1
\end{array} \right), \qquad A_3=\left( \begin{array}{cc}
6n+1 & 6  \\ n & 1
\end{array} \right);
\]

\begin{lemma}\label{twisting}   
The manifold $M_n$ admits at most $\frac{n(n+1)}{2}$ tight contact structures up to isotopy for any $n\geq1$. 
\end{lemma}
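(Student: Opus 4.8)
The plan is to follow the standard convex surface theory pipeline used in \cite{EH01,W06,G008,GLS06,GLS07,GM09}, adapted to the specific Seifert data of $M_n=M(\frac{1}{2},-\frac{1}{3},-\frac{n}{6n+1})$. We start from the decomposition $M_n\cong(\Sigma\times S^1)\cup_{A_1\cup A_2\cup A_3}(V_1\cup V_2\cup V_3)$, where $V_i$ are convex neighborhoods of the singular fibers with boundary slopes $\mathrm{slope}(\Gamma_{\partial V_i})=\frac{1}{n_i}$, $n_i<0$. As in the proof of Theorem~\ref{main}, the first move is to use vertical convex annuli between pairs of $V_i$, together with the Imbalance Principle and the Twist Number Lemma, to thicken the $V_i$ as much as possible. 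Since $t(\xi)<0$ for every tight $\xi$ on $M_n$ by \cite[Corollary~4.6]{G008}, we cannot in general reach $t(\xi)=0$; the point is instead to reduce to a normal form in which the twisting of a Legendrian regular fiber is exactly $-1$ and the boundary slopes of two of the three solid tori have been pushed to the ``integral'' values $s_1=-1$, $s_2=0$, exactly as in the Key Shortcut~\ref{shortcut}. Here, however, because $\frac{1}{2}+\frac{2}{3}+\frac{5n+1}{6n+1}$ lies in the awkward range $[2,\tfrac94)$, one does \emph{not} automatically get a neighborhood $V_3$ with slope $0$: the computation $s_{n_1}=-\frac{(k-5)n_1+k-2}{(k-6)n_1+k-3}$ type formula (with the relevant substitution for $M_n$) shows $s_{n_1}$ need not stay $\le\frac{p_3-q_3}{v_3-u_3}$.

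Consequently the core of the argument is the analysis of the ``middle'' solid torus $V_3$. After thickening $V_1,V_2$ and cutting/rounding a vertical annulus between them, I would compute via the Edge Rounding Lemma the boundary slope of the resulting torus isotopic to $\partial V_3$, transport it through $A_3^{-1}$ to a slope $s_{n_1}$ on $\partial V_3$, and then track, using the Farey Tessellation (Theorem on bypass attachment), which slopes on $\partial V_3$ are realized as $n_1$ ranges over the negative integers and as we attach the available bypasses. This is where the count $\frac{n(n+1)}{2}$ must emerge: one expects to find a neighborhood $V_3'\subset V_3$ whose boundary slope can be any of a triangular array of values, governed by the continued fraction expansion $-\frac{6n+1}{n}=[-7,-2,\ldots,-2]$ (with $n-1$ copies of $-2$), and at each such slope the classification of tight contact structures on the solid torus $V_3'$ \cite[Theorem~2.3]{H000} together with the state-traversal/bypass bookkeeping contributes the appropriate multiplicity. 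The two pieces $V_1$ (slope $-1$, i.e.\ $A_1^{-1}(-1,1)$) and $V_2$ (slope $\infty$) are standard neighborhoods carrying a unique tight structure each, and $\Sigma\times S^1$ carries a unique tight structure by \cite[Lemma~5.1]{H00}, so all the counting is concentrated in $V_3$ plus the choices made while thickening.

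Concretely, the steps in order: (1) write down the slopes $s_1,s_2,s_3$ measured in $-\partial(M_n\setminus V_i)$ from the matrices $A_1,A_2,A_3$; (2) use vertical annuli plus Imbalance/Twist Number Lemmas to raise $n_1,n_2$ as far as the arithmetic allows, recording at each stage the finite set of possible slope configurations; (3) apply Edge Rounding after cutting a $V_1$--$V_2$ annulus and compute the induced slope on $\partial V_3$ as a function of $n_1$; (4) run the Farey algorithm to enumerate the reachable dividing slopes on $\partial V_3'$ and, for each, count tight structures on the solid torus via \cite[Theorem~2.3]{H000} and \cite[Theorem~4.16]{H000}; (5) account for possible redundancies — different bypass sequences or different $(n_1,n_2)$ yielding isotopic contact structures on $M_n$ — using the gluing/uniqueness results for $\Sigma\times S^1$; (6) sum the contributions and check the total is $\le\frac{n(n+1)}{2}$. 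The main obstacle is Step~(4)--(5): unlike the $L$-space cases in Theorem~\ref{main} where monotonicity of $s_{n_1}$ forces a clean single ``integral'' neighborhood and the count factors as $T(r_1)T(r_2)T(r_3)$, here the non-monotone behavior of $s_{n_1}$ forces one to carefully enumerate a genuinely two-parameter family of thickenings and to prove that the bookkeeping does not overcount — this is precisely the delicate combinatorial heart that parallels \cite[Section~4]{GM09}, and getting the redundancy analysis exactly right (so the bound is $\frac{n(n+1)}{2}$ and not something larger) is the step requiring the most care.
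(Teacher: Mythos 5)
Your proposal assembles the right toolbox (vertical annuli, Imbalance Principle, Twist Number Lemma, Edge Rounding, Farey tessellation, Honda's solid torus classification), but it is missing the organizing idea of the proof and one of its steps is false. You assert that every tight structure on $M_n$ can be brought to a normal form in which a Legendrian regular fiber has twisting $-1$ and $s_1=-1$, $s_2=0$ ``exactly as in the Key Shortcut.'' That normalization is precisely what fails here: the thickenings in \ref{shortcut} are produced by running a vertical annulus from $V_1$ (resp.\ $V_2$) to a regular fiber of twisting $-1$, so they are only available when the maximal twisting $t(\xi)$ actually equals $-1$. The correct first step is to determine the possible maximal twistings: running the annulus argument against a Legendrian regular fiber $L$ of maximal twisting $-t$ forces $m_1=-3k-1$, $m_2=-2k-1$ and $t=6k+1$ for some integer $k\geq 0$ (so the slopes are $s_1=-\frac{3k+1}{6k+1}$, $s_2=\frac{2k}{6k+1}$, not $-1$ and $0$ unless $k=0$); cutting and rounding the $V_1$--$V_2$ annulus then gives $\partial V_3$ slope $-\frac{k}{6k+1}$, which is $-n+k$ in the coordinates of $\partial V_3$, and the case $k\geq n$ is excluded because it would yield a thickening of $V_3$ with slope $\infty$, hence a vertical Legendrian curve of twisting $-6$, contradicting $t\geq 6n+1$. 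This stratification of tight structures by $t(\xi)=-6k-1$, $0\leq k\leq n-1$, is the missing idea.

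Your counting mechanism is also off. You expect the bound to emerge from a ``triangular array'' of reachable slopes on a single $V_3'$ governed by the continued fraction $-\frac{6n+1}{n}=[-7,-2,\dots,-2]$; but Honda's product formula applied to that expansion gives $6$, not $\frac{n(n+1)}{2}$, and no amount of Farey bookkeeping on one solid torus produces the triangular number. In the actual argument the count is a sum over the strata: for each $k$ the complement $M_n\setminus V_3'$ carries a unique tight structure rel boundary (standard $V_1$, $V_2$, and an annulus whose dividing set consists only of horizontal arcs), while $V_3'$ has integer boundary slope $-(n-k)$ and therefore carries exactly $n-k$ tight structures by \cite[Theorem $4.16$]{H000}; summing $\sum_{k=0}^{n-1}(n-k)=\frac{n(n+1)}{2}$ gives the upper bound, with no separate redundancy analysis needed. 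Without the twisting-number stratification, your steps (4)--(6) have no concrete content and the stated bound does not emerge.
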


\begin{proof}

 We would like to first determine the maximal twisting number of tight contact structures on $M_n$. It was proven first by Wu in ~\cite{W06} that for any tight contact structure $\xi$ on small Seifert fibered spaces with $e_0=-2$, its maximal twisting number $tw(\xi)<0$. In particular this fact applies to the tight contact structures on $M_n$.

\begin{claim}
If $\xi$ is a tight contact structure on $M_n$. Then the maximal twisting number $tw(\xi)=-6k-1$ for some $k$ with $0\leq k\leq n-1$
\end{claim}

\begin{proof} As practiced in the previous sections, we want to use a vertical annulus between the standard neighborhoods $V_1$ or $V_2$  and Legendrian regular fiber $L$ with the maximal twisting number to produce bypasses on $V_1$ or $V_2$ side, and hence normalize/thicken these neighborhoods as much as we can. Then, based on their slopes determine the twisting number of potential tight contact structures that permit those slopes. 

Let $tw(\xi)=-t<0$, and $L$ be a Legendrian regular fiber with $tw(L,\xi)=-t$. After some small isotopy, we can arrange the decomposition above so that the neighborhoods $V_i$ of the singular fibers $F_i$ and $L$ do not intersect. The slopes of the standard neighborhoods are $-\frac{1}{m_i}$. When measured in $-\partial(M_n\setminus V_i)$ they become

\[
s_1=-\frac{m_1}{2m_{1}+1},~ s_2=\frac{m_{2}+1}{3m_{2}+2},~~s_3=\frac{nm_{3}+1}{(6n+1)m_{3}+6}.
\]

  Let $\mathcal{A}$ be a vertical annulus whose boundary is a vertical curve on $V_1$ side and the Legendrian regular fiber $L$. Then the dividing set of $\mathcal{A}$ will have boundary parallel arcs (hence bypasses) on $-\partial(M\setminus V_1)$ side whenever $2m_1+1<-t$. Those bypasses will potentially increase the twisting number $m_1$. More precisely, as the ruling slopes on $\partial V_1$ is $-\frac{1}{2}$, by the Twist Number Lemma from \cite{H000} and the choice of $L$, we can increase $m_1$ by one till either $2m_1+1=-t$  or $m_1=-1$. Similarly $m_2$ can be increased till either $3m_1+2=-t$ or $m_2=-1$. In particular, there is a non-negative integer $k$ satisfying $m_1=-3k-1$, $m_2=-2k-1$ and $t=6k+1$. Let $\mathcal{A}$ be a vertical annulus between $V_1$ and $V_2$ whose boundary now consists of vertical curves on $V_1$ and $V_2$ side. Note that the dividing set of $A$ cannot have any boundary parallel arcs because the maximality of $-t$. We can cut along $\mathcal{A}$ and round the corners to get a smooth manifold $M\setminus (V_1\cup V_2\cup \mathcal{A})$ such that $\partial(M\setminus (V_1\cup V_2\cup \mathcal{A})$ is smoothly isotopic to $\partial(M\setminus V_3)$. Moreover, by the Edge Rounding Lemma from \cite{H000} we compute its slope as

\[
s(\Gamma_{\partial(M_{n}\setminus V_1 \cup V_2 \cup \mathcal{A})})=-\frac{m_1}{2m_{1}+1} + \frac{m_{2}+1}{3m_{2}+2} - \frac{1}{2m_{1}+1}=-\frac{k}{6k+1}.
\]    
 
When measured in $\partial V_3$

\[
s(\Gamma_{\partial V_3}) = A^{-1}_{3}(6k+1,k)=-n+k.
\]

We show that $k\geq n$ is impossible. To this end, suppose $k \geq n$ which then implies $t=6k+1\geq 6n+1$, and that there is a neighborhood $V'_3$ of $F_3$ such that slope$\Gamma_{\partial V'_3}=\infty$. When measured with respect to $-\partial (M_n\setminus V'_3)$ this slope becomes $\frac{1}{6}$ which contradicts to $t\geq 6n+1$ whenever $n\geq1$. Indeed, in this case a Legendrian ruling curve fiber on $-\partial(M_{n}\setminus V'_3)$ will have twisting $-6$. Therefore we obtain that  $tw(\xi)=-6k-1$ for some $k$ with $0\leq k\leq n-1$, proving the claim.  
\end{proof}

 Now we can decompose $M_{n}$ as $(M_n\setminus V'_3)\cup V'_3$ where $(M_n\setminus V'_3)$ is made of $V_1$, $V_2$ and a neighborhood of the annulus $A$.  Since $V_1$ and $V_2$ are the standard neighborhoods for each $k$, each carries a unique tight contact structure. Moreover the dividing set of $A$, which we determined that it is made of only horizontal arcs, uniquely determines a contact structure in the neighborhood of $A$. Therefore, there is unique tight contact structure on $M_n\setminus V'_3$ relative to its boundary $\partial (M_n\setminus V'_3)\cong\partial V'_3$. On the other hand, $V'_3$ has boundary slope $-n+k$, and by \cite[Theorem $4.16$]{H000}, it carries exactly $n-k$ tight contact structures relative to its boundary. Since $0\leq k \leq n-1$, we get that the total number of tight contact structures on $M_n$ is at most $\frac{n(n+1)}{2}$.   
\end{proof}

\subsection{Lower bound} Let $M_{\infty}$ denote the manifold obtained by zero surgery along the left handed trefoil. This small Seifert fibered manifold has also the structure of a torus bundle over the circle as

\[
M_{\infty}=T^2\times [0,1]/\sim_{A}
\]

where $(x, 1)=(Ax, 0)$ and $A$ is the diffeomorphism of $T^2$ given by
\[
A=\left( \begin{array}{cc}
0&-1\\ 1&1
\end{array} \right)
\]

Recall the homotopy class of an oriented tangent 2-plane field $\xi$ on a $3$--manifold $M$ with $c_1(\xi)$ torsion is determined by two invariants \cite[Theorem~$4.16$]{Go98}: the two dimensional invariant is spin$^c$-structure $\mathfrak{s}_{\xi}$ induced by $\xi$ and the three dimensional invariant is $\theta(\xi)\footnote{This also denoted as $d_3(\xi)$ in some other references. We will stick with Gompf's notation. The two are related by $4d_3(\xi)=\theta(\xi)$}\in \mathbb{Z}$ defined by
\[
\theta(\xi) = c_1^2(X,J) -3\sigma(X) -2\chi(X),
\]
for any almost-complex 4-manifold $(X,J)$ with $\partial X = M$ and such that $\xi$ is the field of complex tangencies $TM\cap J(TM)$. Here $\chi(X)$ is the Euler characteristic and $\sigma(X)$ is the signature. If $H_1(M)$ has no $2$ torsion (this is true for example for the manifold $M_{\infty}$), then $c_1(\xi)$ does determine the $\mathfrak{s}_{\xi}$.

The classification of tight contact structures on $M_{\infty}$, up to isotopy/homotopy and their fillability are well understood. The following theorem summarize those.

\begin{theorem}  $M_{\infty}$ admits an infinite family of tight contact structures $\{\xi_{i}\}^{\infty}_{i=0}$ such that
\begin{enumerate}
\item~\cite[Theorem~$0.1$]{H00}\cite{G99} They have the Giroux torsion $Tor(\xi_{i})=i$, and hence are pairwise non-isotopic. Moreover, any other tight structure on $M_{\infty}$ is isotopic to $\xi_i$ for some $i$.     
\item~\cite[Theorem~$0.1$]{H00}\cite{G99} All are homotopic with $\theta(\xi_{i})=4$ (note that $c_1(\xi_i)=0$ for all $i\geq 0$).
\item~\cite[Theorem~$0.1$]{H00}\cite{G99} All are weakly fillable.
\item~\cite{GAY06} The contact structure $\xi_0$ is Stein (and hence strongly) fillable, while for $i>0$, $\xi_i$ is not strongly fillable. 
\item~\cite[Theorem~$1$]{GHM07}. The contact invariants $c(\xi_{i})$ which have degree $-\frac{\theta(\xi_i)}{4}-\frac{1}{2}$ with untwisted coefficients are all zero for $i>0$, while with twisted coefficients all are non-zero and pairwise different for $i\geq 0$.    
\end{enumerate}
\end{theorem}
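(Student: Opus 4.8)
The plan is to assemble this statement from the literature rather than reprove it from scratch, since all five items are recorded (in various forms) in the cited works; the task is to translate them into the normalization used here and check consistency. First I would fix the identification of $M_\infty$ as the torus bundle with monodromy $A=\bigl(\begin{smallmatrix}0&-1\\1&1\end{smallmatrix}\bigr)$ and verify directly that this equals $S^3_0(\text{left trefoil})$: the left-handed trefoil is the $(2,-3)$ torus knot, $0$-surgery produces the Seifert fibered space $M(-1;\tfrac12,\tfrac13,\tfrac16)$ in one convention, which one checks coincides with the given torus bundle by comparing the monodromy of the induced surface bundle structure (the fiber is the once-punctured torus Seifert surface capped off, and the monodromy is the order-six map visibly conjugate to $A$). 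This fixes orientations so that the subsequent invariants come out with the stated signs.

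Next I would invoke Honda's classification of tight contact structures on torus bundles \cite[Theorem~0.1]{H00} (equivalently Giroux \cite{G99}): for this particular (elliptic, i.e.\ $|\mathrm{tr}\,A|<2$) monodromy, the tight contact structures are exactly $\{\xi_i\}_{i\ge 0}$ distinguished by Giroux torsion $i$, and Honda's list shows they are pairwise non-isotopic and exhaust all tight structures — this gives item (1). For item (2), I would compute $\theta(\xi_0)$ for the Stein-fillable representative using Gompf's formula $\theta(\xi)=c_1^2(X,J)-3\sigma(X)-2\chi(X)$ on an explicit Stein filling (e.g.\ the one from Legendrian surgery on the standard diagram), obtaining $\theta=4$ and $c_1=0$; since adding Giroux torsion via a half-Lutz-type modification on a vertical torus does not change the homotopy class of the plane field (it is supported in a neighborhood of a torus and the relevant obstruction classes vanish on $T^2\times I$ with these coefficients), all $\xi_i$ are homotopic with $\theta=4$. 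Item (3) is again from Honda/Giroux: every $\xi_i$ is weakly fillable, with the weak filling coming from a symplectic cap/filling construction on the torus bundle.

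For item (4) I would cite Gay \cite{GAY06}: positive Giroux torsion obstructs strong fillability, so $\xi_i$ for $i>0$ is not strongly fillable, while $\xi_0$ is Stein fillable by the explicit construction above. Item (5) is the Ghiggini--Honda--Van Horn-Morris computation \cite[Theorem~1]{GHM07}: the untwisted contact invariant $c(\xi_i)\in \widehat{HF}(-M_\infty)$ vanishes for $i>0$ (consistent with and in fact implying the non-fillability, via the vanishing under positive torsion), lives in degree $-\theta(\xi_i)/4-\tfrac12=-\tfrac32$, and over the twisted (Novikov) coefficient ring the invariants $c(\xi_i)$ are all nonzero and pairwise distinct — this is precisely the input that the lower-bound argument for Theorem~\ref{main1} needs, since it lets one distinguish the contact structures on $M_n$ by mapping into $M_\infty$ and reading off twisted invariants. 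The main obstacle is purely bookkeeping: making every orientation and normalization convention (Honda's versus Gompf's versus the Seifert-invariant conventions of Section~2) agree so that the signs of $\theta$, the degree of $c(\xi_i)$, and the direction in which Giroux torsion increases all match the statement; none of the mathematical content is new here, but a sign error would propagate into the proof of Theorem~\ref{main1}.
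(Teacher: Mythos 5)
Your proposal is correct and takes essentially the same route as the paper: the theorem is a summary of known results, proved by citation to Honda/Giroux for items (1)--(3), Gay for (4), and Ghiggini--Honda--Van Horn-Morris for (5), together with the orientation/normalization bookkeeping you describe (and your check $\theta(\xi_0)=0-3(-8)-2(10)=4$ on the $\tilde{E}_8$ plumbing matches the paper's use of that filling). Nothing further is needed.
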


Since the contact structures $\xi_i$ are all vertical, the knot

\[
F'={\bf 0}\times [0,1]/\sim_{A}
\] 

is tangent to $\xi_i$ for all $i$. If we use the Seifert fibration structure on $M_{\infty}$, i.e. the surgery description in Figure~\ref{Figure4}, then the knot $F'$ is topologically isotopic to a meridian of the left handed trefoil. The manifold $M_{\infty}$ can also be described as the boundary of $E_9$ plumbing from which we obtain its the unique Stein fillable contact structure. If $F$denotes the image of $F'$ under this isotopy and $F'$ has framing $f$, then $F$ has framing $f+1$.

\begin{figure}[!h]
\centering
\includegraphics[scale=0.7]{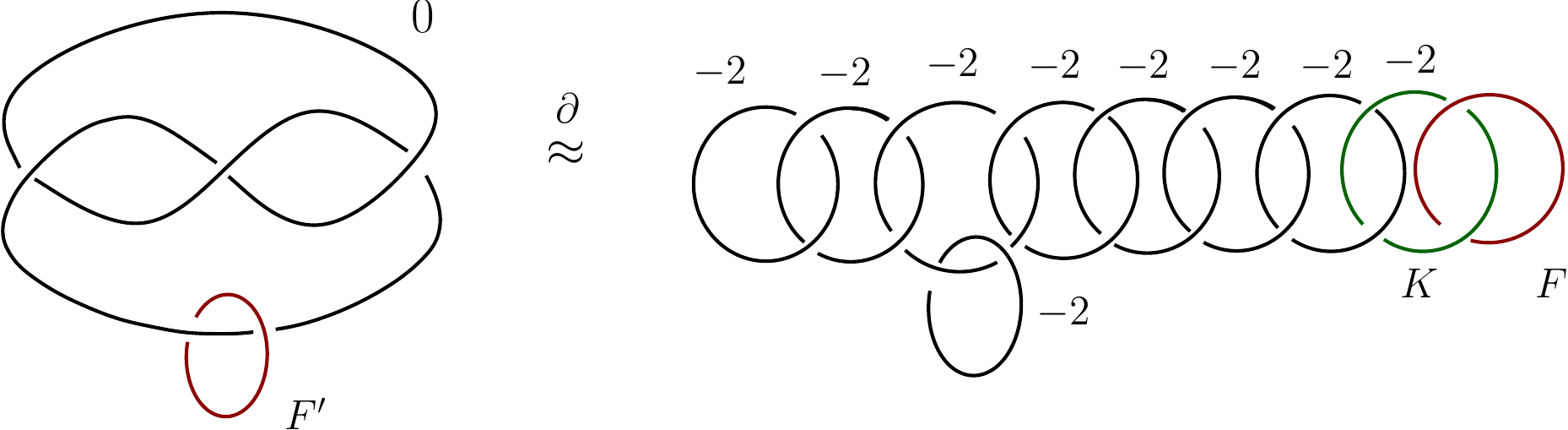} 
\caption{}
\label{Figure4}
\end{figure}

\begin{lemma}\cite[Lemma~$3.5$]{G06}
There exists a framing for $F$ such that $tw(F,\xi_{i})=-i-1$. Moreover smooth $-n-1$--surgery along $F$ gives $M_{n}$.
\end{lemma}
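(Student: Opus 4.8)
The plan is to treat the two assertions in turn: the twisting‑number computation, which simultaneously \emph{defines} the relevant framing on $F$, and then the identification of the smooth surgery, which is a Kirby‑calculus computation relative to that framing. Throughout I will freely use the decomposition and models already set up in this section, together with Honda's classification of tight structures on torus bundles.

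For the twisting number, I would argue as follows. Since each $\xi_i$ is tangent to $F'$, a $C^0$‑small perturbation supported near $F'$ makes $F'$ — and hence its image $F$ — Legendrian with a standard convex solid‑torus neighbourhood, so $tw(F,\xi_i)$ is well defined once an auxiliary framing on $F$ is fixed. I would then invoke the explicit models of Honda \cite{H00} and Giroux \cite{G99}: $\xi_i$ is obtained from $\xi_0$ by inserting $i$ pairwise disjoint copies of the standard $2\pi$‑torsion layer $(T^2\times[0,1],\ker(\sin(2\pi z)\,dx+\cos(2\pi z)\,dy))$, each of which $F'$ meets transversally in a single arc (one may take these layers horizontal in the mapping‑torus coordinates, hence disjoint from a tubular neighbourhood of $F'$). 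Reading the rotation of the contact planes along $F'$ through such a layer against the coordinate framing $dx,dy$ shows that the contact framing loses exactly one twist per layer, so $tw(F',\xi_i)=tw(F',\xi_0)-i$ with respect to any fixed framing. It then remains to pin down the framing and the base value, and here I would pass to the $E_9$ (Stein) picture, in which $\xi_0$ is the unique Stein‑fillable structure and $F$ appears as an explicit Legendrian curve; computing the rotation of $\xi_0$ along $F$ against the plumbing framing, together with the recorded shift $f\mapsto f+1$ under the isotopy from the torus‑bundle description, yields $tw(F,\xi_0)=-1$. Combining gives $tw(F,\xi_i)=-i-1$, and we declare the framing realizing this to be ``the framing for $F$''.

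For the smooth statement, present $M_\infty$ by the surgery diagram of Figure~\ref{Figure4}, i.e.\ the left‑handed trefoil with coefficient $0$; by hypothesis $F$ is isotopic in $M_\infty$ to a meridian $\mu$ of this trefoil. Performing $(-n-1)$‑surgery on $F$ with respect to the framing fixed above is, after translating through the framing shift $f\mapsto f+1$, the same as the corresponding integer surgery on $\mu$; a slam‑dunk then absorbs $\mu$ into the trefoil and changes its coefficient from $0$ to $\frac{1}{n}$. Thus the surgered manifold is $S^3_{1/n}(\text{left-handed trefoil})$, which by Moser's description of surgeries on torus knots is the small Seifert fibered space with exceptional fibers of orders $2,3$ and $6n+1$; matching normalized Seifert invariants, using the description $M_n\cong M(\frac12,-\frac13,-\frac{n}{6n+1})$ from the previous section, identifies it with $M_n$.

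The main obstacle is the bookkeeping in the first part: one must be certain that the auxiliary framing used to say ``$tw$ drops by one per torsion layer'' is literally the framing in which $tw(F,\xi_0)=-1$ is verified, and that the shift $f\mapsto f+1$ is applied with the correct sign. An error in either place would give $tw(F,\xi_i)=-i$ instead of $-i-1$ and, correspondingly, would produce $S^3_{1/(n\pm1)}$ rather than $S^3_{1/n}$ in the second part, breaking the identification with $M_n$ — so the consistency of the two halves is itself a useful internal check. A secondary point requiring care is confirming that the $i$ torsion layers genuinely can be taken disjoint from a tubular neighbourhood of $F'$, so that $F'$ meets exactly $i$ of them; this is built into Honda's normal forms but should be spelled out explicitly.
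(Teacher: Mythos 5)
The paper offers no argument for this lemma at all: it is imported verbatim from \cite{G06} (with the analogous statement and proof also appearing in \cite{GM09}), so there is no ``paper's own proof'' to compare against. That said, your outline is essentially the argument given in those references, and it is correct in its main lines: the curve $F'$ is a vertical Legendrian curve traversing each standard Giroux-torsion layer $\left(T^2\times[0,1],\ker(\sin(2\pi z)dx+\cos(2\pi z)dy)\right)$ in a single arc along which the contact planes make one extra full turn, so the twisting drops by exactly one per layer; anchoring at $i=0$ with an explicit computation for the unique Stein fillable $\xi_0$ fixes the framing once and for all and gives $tw(F,\xi_i)=-i-1$. For the smooth part, your bookkeeping checks out: coefficient $-n-1$ in the lemma's framing is $-n$ in the meridian-disk framing, the slam-dunk converts the $0$-framed left-handed trefoil to coefficient $1/n$, and Moser's theorem identifies $S^3_{1/n}(\text{left-handed trefoil})$ with the Seifert fibered space with multiplicities $2,3,6n+1$, i.e.\ with $-\Sigma(2,3,6n+1)\cong M_n$; the internal consistency check you propose (Legendrian surgery on $F_{i,j}$ has coefficient $tw-1=-n-1$) is exactly the right sanity test.

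Two small corrections. First, no perturbation is needed to make $F'$ Legendrian: the paper has already observed that $F'$ is tangent to every $\xi_i$ because these structures are vertical, so $F'$ is Legendrian on the nose. Second, your parenthetical ``one may take these layers horizontal in the mapping-torus coordinates, hence disjoint from a tubular neighbourhood of $F'$'' contradicts the (correct) sentence immediately preceding it: a horizontal layer $T^2\times J$ necessarily meets the section $F'=\{0\}\times[0,1]/\!\sim_A$ in an arc, and it is precisely this intersection that produces the loss of one unit of twisting per layer. Delete the parenthetical; the transverse-arc picture is the one you need. With those repairs the write-up is a faithful reconstruction of the cited proof.
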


 
Let $F_{i,j}$ denote the Legendrian knot obtained from $F$ by applying $n-i-1$ stabilizations such that the rotation number $\rot(F_{i,j},\xi_i)=j$ where $0\leq i\leq n-1$ and $|j|\leq n-i-1$ with $j\equiv n+1-i (\textrm{mod}~2)$. Finally, let $\xi^n_{i,j}$ denote tight contact structures on $M_{n}$ obtained by Legendrian surgery along $F_{i,j}$ in $(M_{\infty},\xi_{i})$.

\begin{lemma}\label{conclasscal}
The contact structures $\xi^n_{i,j}$ are pairwise non-isotopic.  
\end{lemma}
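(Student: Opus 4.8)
The plan is to distinguish the contact structures $\xi^n_{i,j}$ using two invariants: the homotopy-theoretic invariant $\theta$ (equivalently $d_3$), together with the twisted Heegaard Floer contact invariant pulled back from $(M_\infty,\xi_i)$. The overall strategy follows \cite[Section 4]{GM09} closely. First I would fix $n$ and consider two pairs $(i,j)$ and $(i',j')$ in the index set (so $0\le i\le n-1$, $|j|\le n-i-1$, $j\equiv n+1-i \pmod 2$, and likewise for primed). I would split into the cases $i\ne i'$ and $i=i'$, $j\ne j'$.

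For the case $i = i'$, $j\ne j'$: here $\xi^n_{i,j}$ and $\xi^n_{i,j'}$ are both obtained by Legendrian surgery along stabilizations of $F$ inside the \emph{same} ambient contact manifold $(M_\infty,\xi_i)$, differing only in the rotation number of the Legendrian knot. I would compute $c_1$ of the resulting contact structures via the standard formula for $c_1$ under Legendrian surgery (Gompf), which shows $\langle c_1(\xi^n_{i,j}),[\text{core of handle}]\rangle$ records $j$; since $H_1(M_n)$ has no $2$-torsion in the relevant $\mathrm{spin}^c$ class, distinct $j$ give distinct $\mathrm{spin}^c$ structures $\mathfrak{s}_{\xi^n_{i,j}}$, hence the contact structures are not even homotopic, so certainly not isotopic. (Alternatively one invokes naturality of the twisted contact invariant under the surgery cobordism and the fact that the twisted invariants $c(\xi_i)$ are nonzero, transported to nonzero classes in $\underline{HF}$ of $M_n$ sitting in different $\mathrm{spin}^c$ structures.)

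For the case $i\ne i'$: now the $\mathrm{spin}^c$ structures may coincide, and one needs a finer invariant. Here I would use the twisted contact class: by part (5) of the cited theorem on $M_\infty$, the twisted contact invariants $c(\xi_i)\in\underline{HF}(-M_\infty)$ are pairwise distinct and nonzero, and they live in degree $-\theta(\xi_i)/4 - 1/2$; but all $\xi_i$ are homotopic with $\theta(\xi_i)=4$, so the degree alone does not separate them — rather it is the module structure over $\underline{\mathbb{Z}[H_1]}$ that distinguishes the $c(\xi_i)$, reflecting Giroux torsion $i$. I would then argue that the surgery cobordism from $M_\infty$ to $M_n$ induces a map on twisted Floer homology carrying $c(\xi_i)$ to $c(\xi^n_{i,j})$ (up to a unit), and that this map is sufficiently controlled — injective on the relevant summand, or at least injective on the distinguished classes — so that the distinctness of the $c(\xi_i)$ survives. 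Combined with a computation of $\theta(\xi^n_{i,j})$ (again via Gompf's formula $\theta = c_1^2(X,J)-3\sigma(X)-2\chi(X)$ applied to the Stein/almost-complex cobordism built from the $E_9$-filling of $M_\infty$ capped by the surgery handle), one checks that the pairs not separated by $\mathrm{spin}^c$ or by $\theta$ are exactly separated by the twisted contact class.

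The main obstacle I anticipate is the last step: controlling the cobordism map on twisted Heegaard Floer homology well enough to conclude that distinct twisted classes $c(\xi_i)$ map to distinct classes $c(\xi^n_{i,j})$ — i.e., ruling out the possibility that the surgery map collapses different $\xi_i$'s together. This is precisely the technical heart of \cite{GM09}, and I would expect to import their computation of the relevant twisted Floer groups of $M_n$ (done via the surgery exact triangle relating $M_\infty$, $M_n$, and a lens-space-like manifold), together with their identification of the images of the contact classes, adapting the bookkeeping to the present orientation and Seifert-invariant conventions. A secondary, more routine obstacle is simply carrying out the $\theta$ and $c_1$ computations correctly through the chain of diffeomorphisms $M(-2;\tfrac12,\tfrac23,\tfrac{5n+1}{6n+1})\cong M(\tfrac12,-\tfrac13,-\tfrac{n}{6n+1})\cong (-n-1)\text{-surgery on } F$, keeping track of framings (recall $F$ has framing one greater than $F'$) and of the shift between Gompf's $\theta$ and the $d_3$ normalization.
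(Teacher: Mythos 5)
Your proposal has a step that fails at the outset: you propose to separate $\xi^n_{i,j}$ from $\xi^n_{i,j'}$ (same $i$, different rotation numbers) by showing they induce different $\mathrm{spin}^c$ structures via a $c_1$ computation. But $M_n=M(-2;\tfrac12,\tfrac23,\tfrac{5n+1}{6n+1})$ is an \emph{integral homology sphere} (the paper uses this repeatedly), so it carries a unique $\mathrm{spin}^c$ structure and $c_1$ vanishes identically; indeed the paper proves that \emph{all} of the $\xi^n_{i,j}$ are homotopic as plane fields, with $\theta=2$. So neither $\mathrm{spin}^c$ nor $\theta$ separates any pair, and your claim that ``the pairs not separated by $\mathrm{spin}^c$ or by $\theta$ are exactly separated by the twisted contact class'' cannot be the right division of labor. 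The correct tool for the $i=i'$ case is Lisca--Mati\'c together with Plamenevskaya's result: the Stein fillable structures $\xi^n_{0,j}$ have contact invariants $c(\xi^n_{0,j})$ that are linearly independent in $\widehat{HF}_{(-1)}(-M_n)\cong\mathbb{Z}^n$, hence form a basis of that group.

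Your $i\ne i'$ case also has a directionality problem. Naturality of the contact invariant under a Stein cobordism $V$ from $M_\infty$ to $M_n$ gives a map $F_V:\widehat{HF}(-M_n)\to\widehat{\underline{HF}}(-M_\infty)$ sending $c(\xi^n_{i,j})$ to (a unit times) $c(\xi_i)$ --- you cannot ``carry $c(\xi_i)$ to $c(\xi^n_{i,j})$'' as you write; and since each pair $(i,j)$ uses a \emph{different} cobordism (different numbers of stabilizations and of torsion-reducing handles), distinctness of the $c(\xi_i)$ upstairs does not transport to distinctness of the $c(\xi^n_{i,j})$. What the paper actually does is compute each $c(\xi^n_{i,j})$ \emph{explicitly} in the basis $\{c(\xi^n_{0,j})\}$, obtaining $c(\xi^n_{i,j})=\sum_{k=0}^{i}(-1)^k\binom{i}{k}\,c(\xi^n_{0,j-i+2k})$ by induction on $n$, using the two decompositions of the cobordism $Z_n$, the resulting commutative square, and the injectivity of the vertical (twisted-coefficient) maps in degree $-\tfrac12$ to extract the relation $c(\xi^{n+1}_{1,j})=c(\xi^{n+1}_{0,j+1})-c(\xi^{n+1}_{0,j-1})$. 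Pairwise non-isotopy then follows because these are pairwise distinct nonzero linear combinations of linearly independent classes. Twisted coefficients enter only on $M_\infty$, to make the vertical maps injective --- not to distinguish classes on $M_n$. You correctly anticipated that the technical heart is the control of the cobordism maps à la Ghiggini--Van Horn-Morris, but the invariants you chose to carry the argument (\,$\mathrm{spin}^c$, $\theta$, and the pushed-forward twisted classes\,) do not do the job here.
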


 Note that because Legendrian surgery preserves fillability, we obtain that all $\xi^n_{i,j}$ are weakly fillable and hence tight. Among them $\xi^n_{0,j}$, see Figure~\ref{FigureF}, are Stein fillable as $\xi_{0}$ is Stein fillable with $\theta(\xi^n_{0,j})=2$. All other tight structures $\xi^n_{i,j}$for $i>0$ can be made at least strongly fillable because $M_n$ is an integral homology sphere. By work of Lisca-Matic \cite{LM97}, we have that for any $n\geq 1$, the tight contact structures $\xi^n_{0,j}$ are pairwise non-isotopic. Moreover by work of Plamenevskaya ~\cite[Section~$3$]{P04} the contact invariants $c(\xi^n_{0,j})\in \widehat{HF}_{(-1)}(-M_{n})$ are linearly independent over $\mathbb{Z}$. Since, $\widehat{HF}_{(-1)}(-M_{n})\cong \mathbb{Z}^n$ ~\cite[Section~$8.1$]{OZ03}, we obtain that $\widehat{HF}_{(-1)}(-M_{n}) \cong \left\langle  c(\xi^n_{0,-n+1}),c(\xi^n_{0,-n+3}), \cdots, c(\xi^n_{0,n-1}) \right\rangle$.  We shall prove now that all other contact classes $c(\xi^n_{i,j})$ for $i>0$ are distinct linear combinations of these, from which Lemma~\ref{conclasscal} follows immediately. Moreover as the degree of the each of these contact classes is $-\frac{\theta}{4}-\frac{1}{2}=-1$ and $M_n$ is an integral homology sphere,  we also conclude that the contact structures $\xi^n_{i,j}$ all are homotopic with $\theta=2$. 

\begin{figure}[!h]
\centering
\includegraphics[scale=0.7]{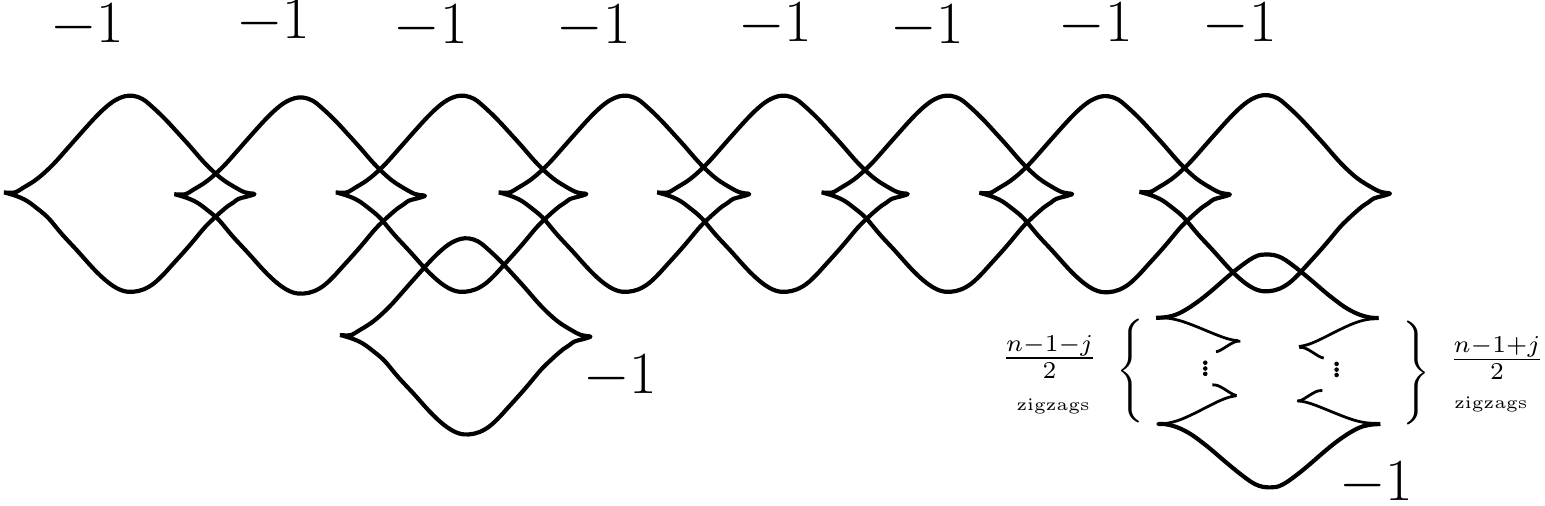} 
\caption{Stein fillable tight contact structures $\xi^n_{0,j}$ on $M_{n}$.}
\label{FigureF}
\end{figure}

\begin{claim}\label{cal}
The contact invariant of $\xi^n_{i,j}$ is 
$$c(\xi^n_{i,j})=\sum^i_{k=0}(-1)^k {i \choose k}c(\xi^n_{0,j-i+2k}).$$

In particular the contact structures $\xi^n_{i,j}$ have non-vanishing and pairwise different contact classes for $(i,j)$ with $0\leq i\leq n-1$ and $|j|\leq n-i-1$ with $j\equiv n+1-i (\textrm{mod}~2)$.
\end{claim}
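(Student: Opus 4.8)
The plan is to establish the formula $c(\xi^n_{i,j})=\sum_{k=0}^i(-1)^k\binom{i}{k}c(\xi^n_{0,j-i+2k})$ by induction on $i$, by comparing the contact classes of tight structures related by a single stabilization of the surgery knot. The base case $i=0$ is the definition of $\xi^n_{0,j}$. For the inductive step, the key geometric input is the relationship between $\xi_i$ and $\xi_{i-1}$ on $M_\infty$: adding one unit of Giroux torsion can be seen, at the level of the surgery picture, as a stabilization of the knot $F$. Concretely, I would recall from \cite{G06} (Lemma~3.5 in this excerpt) how the framing on $F$ encodes the torsion parameter, and note that the Legendrian knot $F_{i,j}\subset(M_\infty,\xi_i)$ used to produce $\xi^n_{i,j}$ can be obtained from $F_{i-1,\cdot}\subset(M_\infty,\xi_{i-1})$ after passing an extra twist region; this exhibits $\xi^n_{i,j}$ via Legendrian surgery on a once-stabilized knot relative to $\xi^n_{i-1,\cdot}$.

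The algebraic engine is the behaviour of the contact invariant under stabilization. I would invoke the naturality of $c$ under Legendrian surgery together with the standard fact (as used by Ghiggini--Van Horn-Morris in \cite{GM09}, and going back to Ozsv\'ath--Szab\'o and Plamenevskaya) that if a Legendrian knot $K'$ is obtained from $K$ by a single stabilization, then the contact class of the surgered manifold on $K'$ equals the difference of the two contact classes obtained by surgering on the two Legendrian push-offs of $K$ differing in rotation number by $2$ — i.e. a skein-type relation $c(K'_{\mathrm{surgery}})=c(K^+_{\mathrm{surgery}})-c(K^-_{\mathrm{surgery}})$ in the appropriate Heegaard Floer group (here $\widehat{HF}_{(-1)}(-M_n)\cong\mathbb{Z}^n$). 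Applying this with $K$ corresponding to the data $(i-1,\cdot)$ gives $c(\xi^n_{i,j})=c(\xi^n_{i-1,j-1})-c(\xi^n_{i-1,j+1})$, and then the binomial identity $\binom{i}{k}=\binom{i-1}{k-1}+\binom{i-1}{k}$ combined with the inductive hypothesis yields the claimed closed form after reindexing. Once the formula is in hand, the non-vanishing and pairwise distinctness follow formally: the vectors $(c(\xi^n_{0,-n+1}),\dots,c(\xi^n_{0,n-1}))$ form a basis of $\widehat{HF}_{(-1)}(-M_n)\cong\mathbb{Z}^n$ by the Plamenevskaya/Lisca--Mati\'c input already cited, and the matrix expressing the $c(\xi^n_{i,j})$ in this basis has rows given by (shifted) rows of Pascal's triangle with alternating signs, which are manifestly nonzero and pairwise non-proportional; counting $(i,j)$ pairs gives exactly $\frac{n(n+1)}{2}$ of them, matching the upper bound from Lemma~\ref{twisting}.

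The main obstacle I anticipate is pinning down precisely the geometric claim that incrementing the Giroux torsion on $M_\infty$ corresponds to a stabilization of $F$ in the surgery diagram, i.e. that $\xi^n_{i,j}$ and the surgery on the stabilized knot relative to $\xi_{i-1}$ genuinely agree as contact structures (not merely have equal contact invariants). This requires a careful analysis of the torus-bundle-versus-Seifert-fibration pictures for $M_\infty$ and of how the Legendrian realizations of $F$ sit inside the basic slices that carry the torsion, following the template of \cite{GM09} closely; I would handle it by appealing to the explicit convex-surface model there rather than reproving it. A secondary, more routine point is bookkeeping the rotation-number/parity constraints $|j|\le n-i-1$, $j\equiv n+1-i\ (\mathrm{mod}\ 2)$ through the skein relation, to ensure the indices $j\pm1$ stay in the admissible range at each stage of the induction.
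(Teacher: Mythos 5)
Your reduction of the claim to the relation $c(\xi^n_{i,j})=c(\xi^n_{i-1,j-1})-c(\xi^n_{i-1,j+1})$ plus the Pascal identity, and your final linear-independence argument in $\widehat{HF}_{(-1)}(-M_n)\cong\mathbb{Z}^n$, both match what the paper does. But the way you propose to obtain that relation has a genuine gap, and you have in fact flagged the weak point yourself without realizing it is fatal. The index $i$ in $\xi^n_{i,j}$ records the Giroux torsion of the ambient structure $(M_\infty,\xi_i)$, while the number of stabilizations of $F_{i,j}$ is $n-i-1$; passing from $(i-1,\cdot)$ to $(i,j)$ at fixed $n$ therefore changes the ambient contact structure \emph{and} removes a stabilization. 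There is no single Legendrian knot in a fixed $(M_\infty,\xi_{i-1})$ whose stabilization produces $\xi^n_{i,j}$ by Legendrian surgery. Indeed, if ``incrementing the Giroux torsion corresponds to a stabilization of $F$'' were true, every $\xi^n_{i,j}$ would be realized by Legendrian surgery inside the Stein fillable $(M_\infty,\xi_0)$ and hence would be Stein fillable --- contradicting Theorem~\ref{main1} and the last lemma of Section~4, which show that $\left\lfloor n/2\right\rfloor$ of these structures are not Stein fillable. So the geometric premise of your inductive step is false, not merely delicate.

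Relatedly, the ``skein-type relation'' $c(K'_{\mathrm{surgery}})=c(K^+_{\mathrm{surgery}})-c(K^-_{\mathrm{surgery}})$ for a once-stabilized knot is not a standard fact you can invoke; the identity $c(\xi^{n+1}_{1,j})=c(\xi^{n+1}_{0,j+1})-c(\xi^{n+1}_{0,j-1})$ holds only at the level of contact invariants and is precisely the nontrivial content of the claim. The paper (following Ghiggini--Van Horn-Morris) earns it as follows: one builds a Stein cobordism $W_\infty$ from $M_\infty$ to itself that reduces Giroux torsion by one, via Legendrian surgery on twelve explicit curves in a genus-one open book together with braid relations; one computes $\widehat{\underline{HF}}(-M_\infty)=\Lambda_{-\frac12}\oplus\Lambda_{-\frac32}$ with twisted coefficients and normalizes $F_{V_n,\mathfrak s}(c(\xi^n_{0,j}))=t^{j/2}$ and $F_{W_\infty,\mathfrak s}=t^{1/2}-t^{-1/2}$; and one uses the commutativity of the resulting square together with the injectivity of the vertical maps $F_{V_{n+1},\mathfrak s}$ \emph{in twisted coefficients} to pull the relation back to $\widehat{HF}(-M_{n+1})$. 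The induction is then on $n$ (using $F_{W_n,\mathfrak s}(c(\xi^n_{i,j}))=c(\xi^{n+1}_{i+1,j})$), not on $i$ at fixed $n$. Your proposal omits all three ingredients --- the torsion-reducing cobordism, the twisted-coefficient computation, and the injectivity statement --- and without them the inductive step cannot be carried out.
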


\begin{proof}

{\bf Step--I:} We describe a sequence of Legendrian surgeries and hence a cobordism $Z_{n}$ between $M_{\infty}$ and $M_n$. This is essentially encoded in the genus one open book decomposition in Figure~\ref{Figure5} as follows. First, note that if we let $l=r=0$ and ignore the red curve we obtain an open book decomposition for $M_{\infty}, \xi_{i}$~\cite{VHM07}. When $i=0$, the open book is compatible with the unique Stein fillable contact structure $\xi_{0}$. This much is a slight modification of the picture in Van Horn-Morris thesis~\cite[Theorem~$4.3.1.d$]{VHM07} (see also \cite[Table~$2$]{E06}). Adding the segment of words as in the lower part of the open book {\it i times} corresponds to increasing the Giroux torsion by $i$. The curve $L_{l,r}$ is the Legendrian knot $F_{i,j}$ in $M_{\infty}, \xi_{i}$ where $l$ and $r$ stand for the number of the positive and negative stabilizations, respectively (see \cite{GM09}) and satisfy $j=l-r$ and $n=l+r+i+1$. If we perform Legendrian surgery along $F_{i,j}$, that is add a positive Dehn twist to the open book along $L_{l,r}$, we obtain an open book decomposition compatible with $M_n, \xi^n_{i,j}$.

\begin{figure}[!h]
\centering
\includegraphics[scale=0.7]{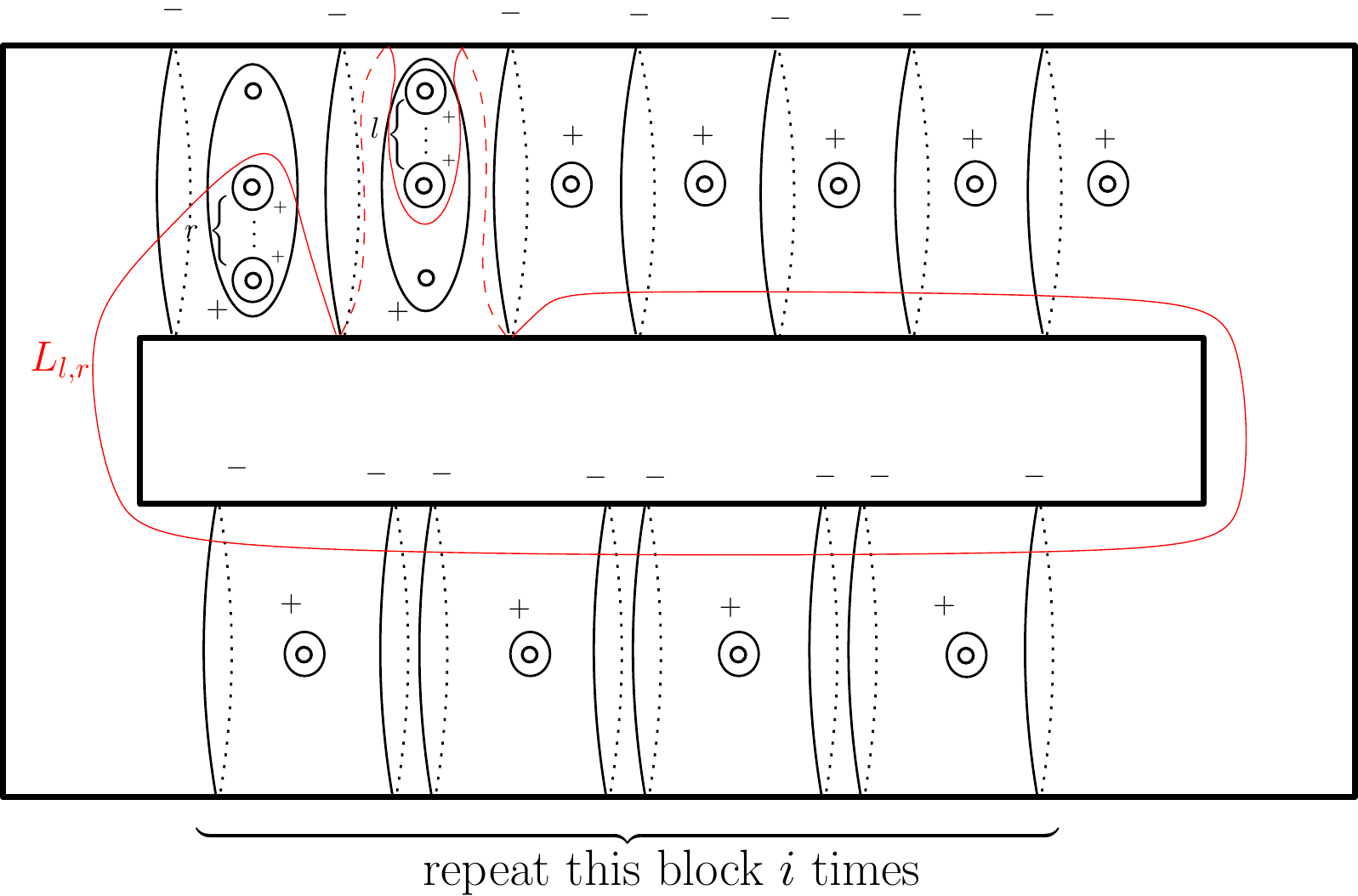} 
\caption{A genus one open book decomposition of $(M_{\infty},\xi_i)$ and $(M_n,\xi^n_{i,j})$. Here the thick circles are boundary components. To obtain an open book supporting $(M_{\infty},\xi_i)$, take the monodromy as the composition of Dehn twists, with indicated signs, along all curves except $L_{l,r}$. The bottom block of curves repeats $i$ times, and meant to introduce the Giroux torsion. The Legendrian curve $L_{l,r}$ is a realization of $F_{i,j}$ on the page, where $l$ and $r$ stand for the number of the positive and negative stabilizations of $F$, respectively and satisfy $j=l-r$ and $n=l+r+i+1$. To obtain an open book supporting $(M_n,\xi^n_{i,j})$ add a positive Dehn twist along $L_{l,r}$ in $(M_{\infty},\xi_i)$. }
\label{Figure5}
\end{figure}

Second, we introduce exactly $12$ simple closed curves, denote this link by $C$, and a positive Hopf stabilization in the open book compatible with $\xi_{i+1}$ as in Figure~\ref{Figure6}-(a). These curves can be Legendrian realized on the page. If we perform Legendrian surgeries along those, that is add positive Dehn twists to the open book along the blue curves, we get the open book in Figure ~\ref{Figure6}-(b). Finally, we apply the braid relation~\cite[Lemma~$4.4.2$]{VHM07} four times to get the open book in Figure~\ref{Figure6}-(c) which is an open book for $(M_{\infty}, \xi_{i})$, that is via this procedure we reduced the Giroux torsion by $1$.

\begin{figure}[!h]
\centering
\includegraphics[scale=0.7]{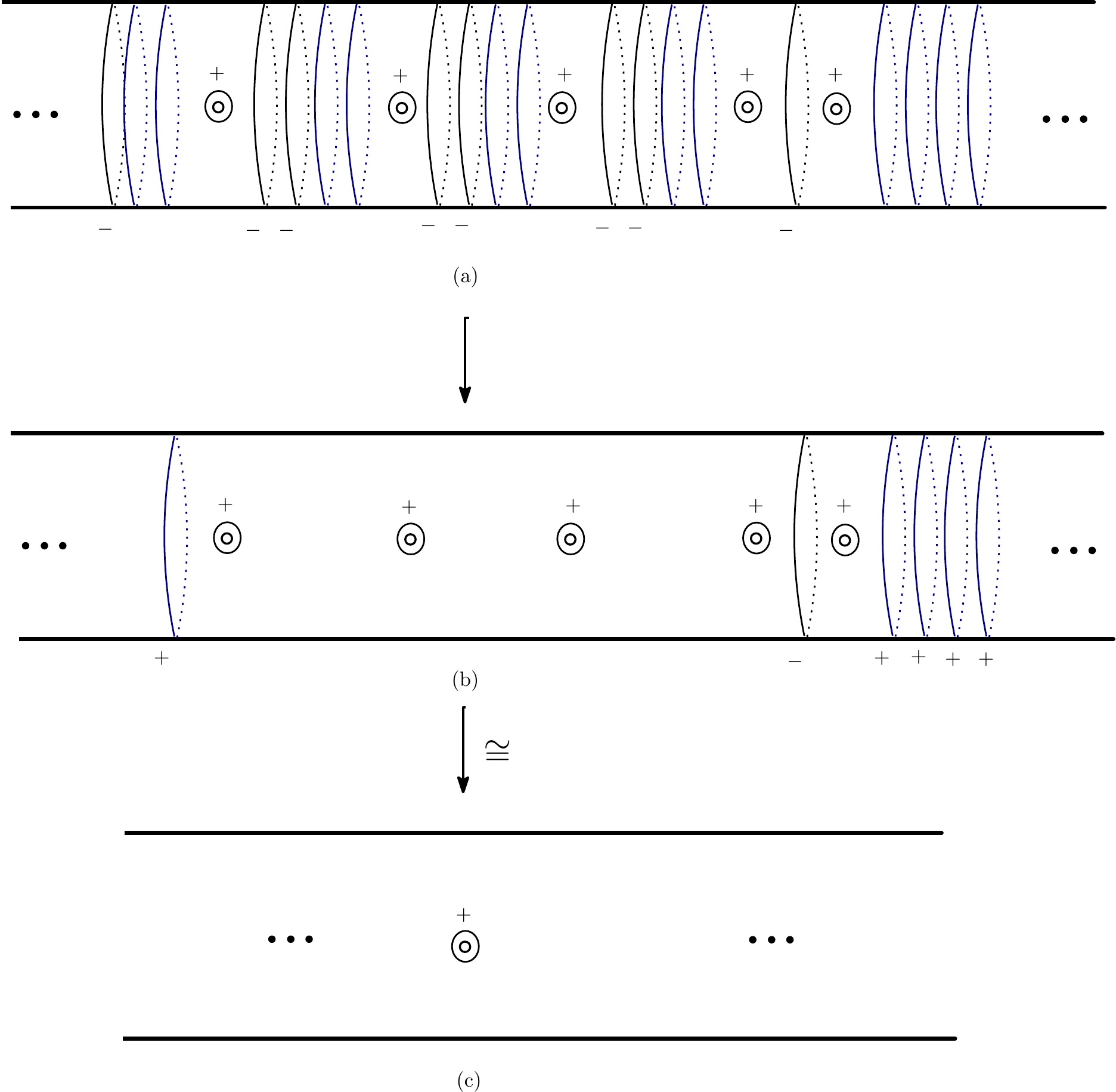} 
\caption{In $(a)$, we see a piece of open book for $(M_{\infty},\xi_{i+1})$ describing a Giroux torsion one plus a positive Hopf stabilization. In this stage we introduce exactly $12$ simple closed curves, shown in blue, which make up the link $C$. In $(b)$, we see the result of adding positive Dehn twists along $C$ to the monodromy. After applying the braid relations (four times) to this open book, we reach the open book in $(c)$ which is an open book for $(M_{\infty},\xi_i)$.}
\label{Figure6}
\end{figure}

Now, let $Z_n$ denote the cobordism between $(M_{\infty}, \xi_{i+1})$ and $(M_n,\xi^n_{i,j}$ which is obtained by attaching Stein $2$--handles along $\L_{l,r}\cup C$. This cobordism can be decomposed in two ways:

\begin{itemize}
\item We attach $2$--handles along $C$, first, and then along $\L_{l,r}$, to obtain a cobordism $W_{\infty}$ from $M_{\infty}$ to itself, followed by a cobordism $V_{n}$ from $M_{\infty}$ to $M_n$.   

\item We attach $2$--handles along $L_{l,r}$, first, and then along $C$, to obtain a cobordism $V_{n+1}$ from $M_{\infty}$ to $M_{n+1}$, followed by a cobordism  $W_n$ from $M_{n+1}$ to $M_n$.   
\end{itemize}

{\bf Step--II:} These cobordisms induce maps on Heegaard Floer homology which fit in the following diagram.

\[\begin{diagram}\label{diag}
&\widehat{HF}(-M_n) & \rTo^{F_{W_n,\mathfrak s}} & \widehat{HF}(-M_{n+1})&\\
&\dTo^{F_{V_n,\mathfrak s}} &\hspace*{2cm}& \dTo_{F_{V_{n+1},\mathfrak s}}\\
& \widehat{\underline{HF}}(-M_{\infty}) & \rTo_{F_{W_{\infty}, \mathfrak s}} & \widehat{\underline{HF}}(-M_{\infty}),
\end{diagram}
\]

where $\widehat{\underline{HF}}(M)$ denotes the Heegaard Floer homology of $M$ with twisted coefficients. It has a structure of $\mathbb{Z}[H^1(M)]$-module. Much of Heegaard Floer theory with usual coefficients extends to twisted one. The details of this can be found in ~\cite{OZ04}, \cite{JM08}, and \cite{GM09}. The last reference is particularly helpful for its properties relevant to contact geometry. As explained in \cite{GM09}, main reason using Heegaard Floer with twisted coefficients is that with twisted coefficients the vertical maps are injective in the relevant degrees.

Let $\Lambda=\mathbb{Z}[H^1(M_{\infty},\frac{1}{2}\mathbb{Z})]$ denote the coefficient ring. The introduction of $\frac{1}{2}\mathbb{Z}$ is for symmetry of formulas. Let $\mathfrak s$ denotes the canonical Spin$^c$ structure on cobordisms.  We point out some features of Diagram~\ref{diag} as follows.

\begin{lemma} 
\begin{enumerate}
\item \label{comp} $\widehat{\underline{HF}}(-M_{\infty}, \Lambda)=\Lambda_{-\frac{1}{2}}\oplus\Lambda_{-\frac{3}{2}}$. Moreover, the contact invariant of Stein fillable structure $c(\xi_0)$ is identified with $1$ in summand in degree $-\frac{3}{2}$. 
\item \label{lemma412} We can make choices so that $F_{V_n,\mathfrak s}(c(\xi^n_{0,j}))=t^{j/2}$. 
\item \label{lemma413} If we further choose $F_{W_{\infty},\mathfrak s}$ to be represented by $t^{\frac{1}{2}}-t^{-\frac{1}{2}}$, then Diagram~\ref{diag} commutes.
\end{enumerate}
\end{lemma}
 
\begin{proof}[Proof of \ref{comp}] The corresponding computation for $M_{\infty}$ was done in \cite{GM09} from which our computation follows by reversing orientation. Since the Stein fillable contact structure $\xi_0$ has $\theta(\xi_0)=4$, we obtain that its contact class $c(\xi_0)$ has degree $-\frac{\theta(\xi_0)}{4}-\frac{1}{2}=-\frac{3}{2}$. So $c(\xi_0)$ is the generator of $\Lambda_{-\frac{3}{2}}$. 
\end{proof}

\begin{proof}[Proof of \ref{lemma412}] The proof of this fact is very similar to Lemma~$4.12$ from \cite{GM09}. We only point the main difference. In Figure~\ref{Figure4}-(b), if we ignore the knots $F$, and $K$, then the remaining surgery is boundary diffeomorphic to the Poincar\'{e} homology sphere $\Sigma(2,3,5)$. Let $V_{\infty}$ denote the cobordism between $\Sigma(2,3,5)$ and $M_{\infty}$ obtained by attaching a $2$--handle along the knot $K$ with framing -2. The manifold $\Sigma(2,3,5)$ carries a unique tight contact structure which is Stein fillable. This well known fact can be obtained for example from Lemma~\ref{guidelemma} as well. Let $\xi_{std}$ denote this tight structure. Since $\Sigma(2,3,5)$ is an integral homology sphere one can talk about the Thurston-Bennequin number $\tb$ of a Legendrian knot. In particular, it is easy to see the knot $K$ has a Legendrian representative with $\tb=-1$. So, the $2$--handle can be attached to $K$ in a Stein way, that is $V_{\infty}$ is a Stein cobordism. In particular if $\mathfrak t$ denote the canonical Spin$^c$ structure coming from the Stein structure, then $[F_{V_{\infty},\mathfrak t}(c(\xi_0))]=[c(\xi_{std})]$. Now if we identify $\widehat{\underline{HF}}_{(-\frac{3}{2})}(-M_{\infty})$ and $\widehat{\underline{HF}}(\Sigma(2,3,5))_{(-2)}[t^{1/2},t^{-1/2}]$ with $\mathbb{Z}[t^{\pm 1/2}]$, then since  the map $F_{V_{\infty},\mathfrak t}$ is an isomorphism that is defined up to multiplication by a power of $t$, we can choose it to be the conjugation map. The rest is identical to Lemma~$4.12$ in \cite{GM09}.   
\end{proof}

\begin{proof}[Proof of \ref{lemma413}] This is identical to Lemma~$4.13$ from \cite{GM09}.     
\end{proof}

{\bf Step--III:} All in place, we can prove Claim~\ref{cal} by induction on $n$. For the base case $n=1$, there is nothing to prove as $M_1$ admits a unique tight contact structure by Theorem~\ref{main}-(c). Assume the formula in Claim~\ref{cal} holds for the tight contact structures on $M_n$ for some $n$. We prove it for the ones on $M_{n+1}$.  From the Stein cobordism explained above, we have   ${F_{W_n,\mathfrak s}}(c(\xi^n_{i,j}))=c(\xi^{n+1}_{i+1,j})$, and, by the induction hypothesis: 
\[
c(\xi^{n+1}_{i+1,j})=\sum^i_{k=0}(-1)^k {i \choose k}c(\xi^{n+1}_{1,j-i+2k})
\] 

By the commutativity of Diagram~\ref{diag} and the injectivity of the map ${F_{V_{n+1},\mathfrak s}}$, we find that $c(\xi^{n+1}_{1,j})=c(\xi^{n+1}_{0,j+1})-c(\xi^{n+1}_{0,j-1})$. Finally substituting this in the sum above, and simple modifications o indices gives that 
\[
c(\xi^{n+1}_{i+1,j})=\sum^{i+1}_{k=0}(-1)^{k-1} {i+1 \choose k}c(\xi^{n+1}_{1,j-(i+1)+2k})
\]

which finishes proof of Claim~\ref{cal}.
\end{proof}

\subsection{Strongly fillable but not Stein fillable structures.}

\begin{lemma}
The manifold $M_{n}$ for $n\geq 1$ admits at least $\left\lfloor\frac{n}{2} \right\rfloor$ tight contact structures which are strongly fillable but not Stein fillable. 
\end{lemma}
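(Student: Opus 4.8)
The plan is to exhibit $\lfloor n/2 \rfloor$ of the contact structures $\xi^n_{i,j}$ constructed in the proof of the lower bound that cannot be Stein fillable, while knowing already (from the remark right after Lemma~\ref{conclasscal}) that every $\xi^n_{i,j}$ with $i>0$ is strongly fillable since $M_n$ is an integral homology sphere. So the entire content of this lemma is a \emph{non-Stein-fillability} statement, and the natural obstruction to invoke is the vanishing of the Ozsv\'ath--Szab\'o contact invariant with \emph{untwisted} coefficients: if $c(\xi)=0$ in $\widehat{HF}(-M_n)$ with $\mathbb{Z}$ (or $\mathbb{Z}/2$) coefficients, then $\xi$ is not Stein fillable (indeed not strongly fillable by a filling with $b_2^+=0$ in a way that would force $c\neq 0$; the precise statement we need is the one already used in the Corollary in the introduction, that vanishing untwisted contact invariant obstructs Stein fillability).

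First I would record the untwisted contact invariants from the twisted computation. Claim~\ref{cal} computes $c(\xi^n_{i,j})$ \emph{with twisted coefficients} as an explicit alternating binomial combination $\sum_{k=0}^i (-1)^k \binom{i}{k} c(\xi^n_{0,j-i+2k})$ of the Stein classes $c(\xi^n_{0,\ast})$, and by Lemma~\ref{comp}--\ref{lemma413} these twisted classes are identified, via $F_{V_n,\mathfrak s}$, with the monomials $t^{(j-i+2k)/2}$ in $\Lambda = \mathbb{Z}[t^{\pm 1/2}]$; thus $c(\xi^n_{i,j})$ corresponds to $t^{(j-i)/2}(t-1)^i$ up to a unit. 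The untwisted contact invariant is obtained by specializing $t \mapsto 1$ (applying the augmentation $\Lambda \to \mathbb{Z}$, $H^1(M_\infty) \to 0$, which on the cobordism maps recovers the untwisted theory since $M_n$ is a $\mathbb{Z}HS^3$). Under $t\mapsto 1$ the factor $(t-1)^i$ vanishes for every $i\geq 1$. Hence the untwisted invariant $c(\xi^n_{i,j})$ is zero precisely when $i\geq 1$, and nonzero (the generator of $\widehat{HF}_{(-1)}(-M_n)$, up to sign) when $i=0$. I would state this as a short Claim and prove it in one or two lines by the augmentation argument, citing the naturality of contact invariants under the change-of-coefficients map (as in \cite{GM09}, \cite{OZ04}).

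Next I would count. The contact structures $\xi^n_{i,j}$ with $i\geq 1$ and the admissibility constraints $0\leq i\leq n-1$, $|j|\leq n-i-1$, $j\equiv n+1-i\pmod 2$ are, by Claim~\ref{cal}, pairwise non-isotopic; there are $\sum_{i=1}^{n-1}(n-i) = \binom{n}{2}$ of them. Each is strongly fillable (Legendrian surgery on a weakly fillable structure, and $M_n$ an integral homology sphere upgrades weak to strong) but has vanishing untwisted contact class, hence is not Stein fillable. This already gives $\binom{n}{2}$ such structures, which is $\geq \lfloor n/2\rfloor$ for all $n\geq 1$ and in fact far more; so I would phrase the lemma's bound $\lfloor n/2 \rfloor$ as the (weak, safely-stated) conclusion while noting the stronger count in passing. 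Alternatively, if one wants to hew to exactly the quantity $\lfloor n/2\rfloor$ — perhaps because the intended reading is ``of the remaining ones beyond the $n$ Stein-fillable $\xi^n_{0,j}$, at least $\lfloor n/2\rfloor$ are non-Stein'' and one wants a clean subfamily — it suffices to take, say, the structures $\xi^n_{1,j}$ for the $\lfloor n/2 \rfloor$ admissible values of $j$ (those $j$ with $|j|\leq n-2$, $j\equiv n\pmod 2$, of which there are $\lfloor (n-1)/2\rfloor +$ a boundary adjustment $\geq \lfloor n/2\rfloor$ for $n\geq 2$, and the $n=1$ case being vacuous/trivial), each non-isotopic by Claim~\ref{cal} and non-Stein by the vanishing just established.

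The main obstacle is not any single hard computation but making the coefficient-specialization argument rigorous: one must check that applying the augmentation $\Lambda \to \mathbb{Z}$ to the twisted contact invariant $\underline{c}(\xi^n_{i,j})$ really yields the untwisted invariant $c(\xi^n_{i,j})$, i.e.\ that the identifications in Lemma~\ref{comp}--\ref{lemma413} are compatible with the change-of-coefficients map and that no unit ambiguity (a power of $t^{1/2}$) can disguise a nonzero class as zero — it cannot, since a unit times $(t-1)^i$ still augments to $0$, but this should be said. Once that is in hand, the non-Stein-fillability is immediate from the obstruction theorem for the vanishing untwisted contact invariant, and the counting is elementary. I would also remark that this simultaneously identifies exactly which of the $\binom{n+1}{2}$ tight structures on $M_n$ are Stein fillable — precisely the $n$ structures $\xi^n_{0,j}$ — completing the picture promised in Theorem~\ref{main1}.
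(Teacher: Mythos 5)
There is a genuine gap, and it sits at the heart of your argument: the claim that the untwisted contact invariant of $\xi^n_{i,j}$ vanishes for $i\geq 1$ is false. Claim~\ref{cal} is already a statement about the ordinary contact invariants in $\widehat{HF}(-M_n)$ --- since $M_n$ is an integral homology sphere there is no twisting on $M_n$ itself; the ring $\Lambda=\mathbb{Z}[t^{\pm 1/2}]$ and the monomials $t^{j/2}$ live only in $\widehat{\underline{HF}}(-M_\infty)$, i.e.\ they are the \emph{images} of the classes $c(\xi^n_{0,j})$ under the cobordism map $F_{V_n,\mathfrak s}$, not the classes themselves. In $\widehat{HF}_{(-1)}(-M_n)\cong\mathbb{Z}^n$ the Stein classes $c(\xi^n_{0,j})$ form a $\mathbb{Z}$-basis, so the alternating binomial combination $\sum_k(-1)^k\binom{i}{k}c(\xi^n_{0,j-i+2k})$ is a \emph{nonzero} vector for every admissible $(i,j)$ --- this is exactly the non-vanishing assertion of Claim~\ref{cal}, and it is also forced by weak fillability on an integral homology sphere. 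There is no augmentation $t\mapsto 1$ to apply on $M_n$ (what your specialization computes is the vanishing of the untwisted invariant of $\xi_i$ on $M_\infty$, a different manifold), so the ``vanishing contact invariant'' obstruction is simply not available here. Your proposed count of $\binom{n}{2}$ non-Stein-fillable structures is correspondingly unproven, and the fact that it is much stronger than the $\left\lfloor \frac{n}{2}\right\rfloor$ claimed should itself have been a warning sign.

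The obstruction actually needed is finer than non-vanishing. One assumes a Stein filling $(X_i,J_i)$ of $(M_n,\xi^n_{i,0})$ exists and invokes Plamenevskaya's theorem: the punctured filling, viewed as a cobordism from $-M_n$ to $S^3$, must send $c(\xi^n_{i,0})$ to a \emph{generator} of $HF^+(S^3)$, not merely to something nonzero. The structures singled out are the self-conjugate ones $\xi^n_{i,0}$ with $0<i\leq n-1$ and $i\equiv n-1\pmod 2$; for these the conjugation symmetry $\overline{\xi^n_{0,j}}\cong\xi^n_{0,-j}$ pairs up the terms of the binomial expansion, so $c(\xi^n_{i,0})$ is a combination of elements $c(\xi^n_{0,j})+c(\xi^n_{0,-j})$ (plus, when it occurs, a middle term $c(\xi^n_{0,0})$ with even coefficient), each of which maps to twice something under the conjugation-equivariant filling map. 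The image is therefore not a generator --- contradiction. This symmetry argument applies only to the $\left\lfloor \frac{n}{2}\right\rfloor$ self-conjugate structures, which is precisely why the lemma is stated with that count. If you want to salvage your write-up, this is the step you need to replace.
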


\begin{proof}
We claim that the tight contact structures $\xi^n_{i,0}$ are not Stein fillable for $0<i\leq n-1$ with $i\equiv n-1(mod~2)$. To this end, suppose $(X_i,J_i)$ is a Stein filling of $(M_{n},\xi^n_{i,0})$. By Claim~\ref{cal}, we can express the contact classes $c(\xi^n_{i,0})$ as
\begin{equation}\label{form}
c(\xi^n_{i,0})=\sum^i_{k=0}(-1)^k {i \choose k}c(\xi^n_{0,-i+2k})
\end{equation}

Moreover, if $\mathcal J$ denotes the conjugation map in Heegaard Floer homology, then it is easy to see from the surgery description in Figure~\ref{FigureF} that the contact structure $\overline{\xi^n_{0,j}}=\mathcal J(\xi^n_{0,j})$ is isotopic to $\xi^n_{0,-j}$, and $\xi^n_{i,0}$ is isotopic to its conjugate for each $0\leq i \leq n-1$ with $i\equiv n-1~(\textrm{mod}~2)$. We also point out that the coefficients of the contact classes $c(\xi^n_{0,j})$ and $c(\xi^n_{0,-j})$ in Formula~\ref{form} are the same. Therefore, we conclude that $c(\xi^n_{i,0})$ is the linear combination of elements of the form $c(\xi^n_{0,j})+c(\xi^n_{0,-j})$ for $j=-n+1,-n+3,\cdots, n-1$.     

We can puncture the Stein filling $(X_i,J_i)$ and view it as a Stein cobordism from $-M_{n}$ to $S^3$. Note that if $\mathfrak t_i$ denotes the canonical Spin$^c$ structure coming from the Stein structure $J_i$, then $\mathfrak t_i$ is isomorphic to its conjugate because $\xi^n_{i,0}\cong \overline{\xi^n_{i,0}}$. In particular, by \cite[Theorem~4]{P04} $F^{+}_{X_i,\mathfrak t_i}(c(\xi^n_{i,0}))$ is a generator of $HF^+(S^3)\cong \mathbb{F}$, where $\mathbb{F}\cong \mathbb{Z}_2$ is the coefficient ring. On the other hand by using the fact that the conjugation homomorphism $\mathcal J$ has trivial action on $HF^+(S^3)$ and observations above, we conclude that the map $F^{+}_{X_i,\mathfrak t_i}$ evaluates on each $c(\xi^n_{0,j})+c(\xi^n_{0,-j})$ as        

\[
F^{+}_{X_i,\mathfrak t_i}(c(\xi^n_{0,j})+c(\xi^n_{0,-j}))=2F^{+}_{X_i,\mathfrak t_i}(c(\xi^n_{0,j}))=0.
\]

Note that if $n$ is odd, then the linear combination will include $c(\xi^n_{0,0})$ where $\xi^n_{0,0}$ is a Stein fillable contact structures which is isotopic to its conjugate. We don't know if $F^{+}_{X_i,\mathfrak t_i}(c(\xi^n_{0,0}))=0$, but we do have a crucial detail that the contact class $c(\xi^n_{0,0})$ has {\it even} coefficient ${(n-i) \choose \frac{(n-i)}{2}}$, for $1 \leq i \leq n-2$. In particular, for either parity of $n$,  we end up with the contradiction that $F^{+}_{X_i,\mathfrak t_i}(c(\xi^n_{i,0}))=0$.  Thus, $(M_{n},\xi^n_{i,0})$ cannot be Stein fillable for any $0<i\leq n-1$ with $i\equiv n-1(\textrm{mod}~2)$. This counts to the total of $\left\lfloor\frac{n}{2} \right\rfloor$ strongly fillable tight structures on $M_{\infty}$ that are not Stein fillable.

\end{proof}
\bibliographystyle{amsplain}

\end{document}